\numberwithin{equation}{section}
\newtheorem{proposition}{Proposition}[section]
\newtheorem{theorem}[proposition]{Theorem}
\newtheorem{lemma}[proposition]{Lemma}
\newtheorem{definition}[proposition]{Definition}
\newtheorem{remark}[proposition]{Remark}
\renewenvironment{proof}{\smallskip\noindent{\textbf{Proof.}}%
  \hspace{1pt}}{\hspace{-5pt}{\nobreak\quad\nobreak\hfill\nobreak%
    $\square$\vspace{2pt}\par}\smallskip\goodbreak}
\newenvironment{proofof}[1]{\smallskip\noindent{\textbf{Proof~of~#1.}}%
  \hspace{1pt}}{\hspace{-5pt}{\nobreak\quad\nobreak\hfill\nobreak%
    $\square$\vspace{2pt}\par}\smallskip\goodbreak}
\newcommand{\Id}{\mathinner{\mathrm{Id}}}
\newcommand{\C}[1]{\mathbf{C}^{#1}}
\newcommand{\reali}{{\mathbb{R}}}
\renewcommand{\epsilon}{\varepsilon}
\renewcommand{\phi}{\varphi}
\renewcommand{\theta}{\vartheta}
\newcommand{\rnk}{\mathop{\rm rnk}}
\renewcommand{\d}[1]{\mathinner{\mathrm{d}{#1}}}
\newcommand{\one}{\mathbb1}
\newcommand{\zero}{\mathbb0}
\newcommand{\claimend}{\hfill$\checkmark$\goodbreak}
\begin{document}

\title{Nash Equilibria in Traffic Networks \\ with Multiple
  Populations and Origins--Destinations
}

\author{Rinaldo M.~Colombo$^1$ \and Luca Giuzzi$^2$ \and Francesca
  Marcellini$^1$}

\date{}

\maketitle

\footnotetext[1]{INdAM Unit \& Department of Information Engineering,
  University of Brescia, Italy.\hfill\\ \texttt{rinaldo.colombo@unibs.it}, \texttt{francesca.marcellini@unibs.it}}

\footnotetext[2]{DICATAM, University of Brescia, Italy. \texttt{luca.giuzzi@unibs.it}}

\begin{abstract}
  \noindent Different populations of vehicles travel along a
  network. Each population has its origin, destination and travel
  costs --- which may well be unbounded. Under the only requirement of
  the continuity of the travel costs, we prove the existence of a Nash
  equilibrium for all populations. Conditions for its uniqueness are
  also provided. A few cases are treated in detail to show specific
  situations of interest.

  \medskip

  \noindent \emph{Keywords:} Nash Equilibria on Traffic Networks;
  Multi-Populations Braess Paradox; Multi-Populations Models on
  Networks

  \medskip

  \noindent \emph{AMS 2020 Mathematics Subject Classification:} 90B20,
  91A80, 91B74
\end{abstract}

\section{Introduction}
\label{sec:introduction}

A road network is used by different vehicles, grouped into
\emph{populations} according to their origin, their destination and
their travel costs. For each population, the origin is connected to
the destination by different \emph{routes}, each consisting of a
sequence of adjacent \emph{roads}. Traveling along a road bears a
\emph{cost}, in general different for each population, which may be
related to gas consumption, pollutant production or travel time. In
the latter case, it is reasonable to admit that it can become
infinite, for instance when congestions occur. A road can be shared by
drivers from various populations, with travel costs varying according
to the level of congestion. Each traveler chooses a route among those
available to its population and, correspondingly, pays a \emph{route
  cost} which is the sum of the costs to its population of the roads
constituting that route.

It is then natural to consider several \emph{concurrent} and
\emph{interdependent} \emph{games}, each played by the vehicles in a
single population. According to the \emph{selfish
  paradigm}~\cite{Roughgarden2004332, Roughgarden2005,
  Roughgarden200093}, each individual chooses a route such that no
different choice could be more convenient. In other words, all
populations distribute among the different routes so that each game is
at a \emph{Nash equilibrium} and the whole network is at a
\emph{global} Nash equilibrium.

Below, under rather general assumptions, we prove the existence of
such global Nash equilibria. At these configurations, for each
population, all vehicles in that population pay the same
cost. Moreover, at the equilibria we exhibit, no individual, whatever
the population it belongs to, finds convenient to change its route
choice. The costs of different populations may well be unrelated, in
the sense that a population can be interested in minimizing travel
time, another in gas consumption, and so on.

In the case of a single population, frameworks essentially equivalent
to the one presented below are classical, see for
instance~\cite{MR247935, MR801491, Nagurney2007355, Roughgarden2005,
  Roughgarden200093}. In general, existence proofs of Nash equilibria
rely on the considered game being a \emph{potential game},
see~\cite{beckmann, MR247935}, a property that here fails due to the
presence of multiple populations. A different approach to multiple
populations competing on the same network is presented
in~\cite{MR3337989}, where a measure based framework is employed. Nash
equilibria in multi-agent systems are considered, for instance,
in~\cite{MR3950676}.

As the number of populations and the complexity increase, it is
realistic to expect that multiple (global) Nash equilibria may
occur. Thus, conditions ensuring uniqueness in general situations
become more intricate and require stricter assumptions. First, we
present an equality satisfied by all (global) Nash equilibria. As long
as the number of populations is small and the networks are simple,
this condition may ensure uniqueness, as examples show. In the
Appendix, we also present a general uniqueness theorem.

We stress that in the present, multi-population, setting, a definition
of a \emph{globally optimal\/\footnote{Also referred to as
    \textit{social optimum}, conforming to Wardrop's \emph{Second
      principle}, see~\cite[p.~345]{Wardrop1952}.}}  strategy appears
as quite arbitrary, due to the presence of multiple cost functions. It
is then open to further investigations to extend to the present case
the many relevant results, see e.g.~\cite{Roughgarden2005,
  Roughgarden200093}, about the comparison between globally optimal
strategies and Nash equilibria.

The present multi-population setting allows the appearance of the well
known Braess paradox~\cite{BraessParadox} in a variety of new
situations. Since we comprehend also the case of populations using the
same network but consisting of different vehicles, say trucks and
cars, we show that Braess paradox may appear for both populations, see
\S~\ref{subsec:trucks-cars-same}. Otherwise, the insertion of a new
road for only one population may well increase travel cost at Nash
equilibrium for another population, not directly affected by the new
road, see \S~\ref{subsec:braess-parad-induc}.

To reduce formal complexity, we consider the case of $2$ populations,
i.e., of $2$ origins and $2$ destinations. The $2$ populations are
referred to as the \emph{hat} and \emph{check} population. The
extension to finitely many populations requires merely notational
modifications.

The next Section~\ref{sec:OK} presents the formal setting and the
general existence result. All analytic proofs are left to
Section~\ref{sec:technical-details}. Specific Cases are presented in
Section~\ref{sec:examples}. The final Appendix is devoted to a general
uniqueness result.

\section{Formal Framework and Existence Result}
\label{sec:OK}

We consider $N$ one way roads $r_1, \ldots, r_N$ having at least one
end point (a junction) in common with other roads. At each junction
there is at least one entering and one exiting road. Two roads are
adjacent if the end point of one of the two roads is the initial point
of the other one. A route $\gamma$ is an $m$--tuple of adjacent
(pairwise distinct) roads, say
$\gamma \equiv (r_{h_1}, \ldots, r_{h_m})$, such that the end point of
$r_{h_\ell}$ is the initial point of $r_{h_{\ell+1}}$, for
$\ell \in \{1, \ldots, m-1\}$. By \emph{network} $\mathcal{N}$ we mean
the set of routes.

We assume there are $2$ origin--destination pairs, say
$(\widehat{\mathcal{O}}, \widehat{\mathcal{D}})$ and
$(\widecheck{\mathcal{O}}, \widecheck{\mathcal{D}})$. Correspondingly,
we call $\widehat{\mathcal{N}}$ the sub--network of $\mathcal{N}$
consisting of routes
$\widehat{\gamma}_1, \ldots, \widehat{\gamma}_{\widehat{n}}$
connecting $\widehat{\mathcal{O}}$ to $\widehat{\mathcal{D}}$ and,
similarly we call $\widecheck{\mathcal{N}}$ the sub-network of
$\mathcal{N}$ consisting of routes
$\widecheck{\gamma}_1, \ldots, \widecheck{\gamma}_{\widecheck{n}}$
connecting $\widecheck{\mathcal{O}}$ to
$\widecheck{\mathcal{D}}$. Clearly, $\widehat{n} \leq N$ and
$\widecheck{n} \leq N$.

In the terminology of graph theory, both networks
$\widehat{\mathcal{N}}$ and $\widecheck{\mathcal{N}}$ are (connected)
directed acyclic graphs (DAG) with a single sink and a single source.

As it is usual, see e.g.~\cite{Holden2016}, construct the
$N \times \widehat{n}$ matrix $\widehat{\Gamma}$ and the
$N \times \widecheck{n}$ matrix $\widecheck{\Gamma}$ setting
\begin{eqnarray*}
  \widehat{\Gamma}_{hi}
  & \colonequals
  & \begin{cases}
    1
    & \text{road $r_h$ belongs to route $\widehat{\gamma}_i$},
    \\
    0
    & \text{otherwise.}
  \end{cases}
      \qquad \mbox{for }
      h \in \{1, \ldots, N\} \mbox{ and }i \in \{1, \ldots, \widehat{n}\} \,;
  \\
  \widecheck{\Gamma}_{hi}
  & \colonequals
  & \begin{cases}
    1 & \text{road $r_h$ belongs to route $\widecheck{\gamma}_i$},
    \\
    0 & \text{otherwise.}
  \end{cases}
        \qquad \mbox{for }
        h \in \{1, \ldots, N\} \mbox{ and }i \in \{1, \ldots, \widecheck{n}\}\,.
\end{eqnarray*}

A single road may well belong to more than one route and to one or
both the subnetworks $\widehat{\mathcal{N}}$ and
$\widecheck{\mathcal{N}}$. A structural assumption of use below on the
subnetworks $\widehat{\mathcal{N}}$ and $\widecheck{\mathcal{N}}$ is
the following condition:
\begin{enumerate}[label=\bf{($\mathbf\Gamma$)}]
\item \label{it:IR} Each route in $\widehat{\mathcal{N}}$ contains a
  road that is not contained in any other route in
  $\widehat{\mathcal{N}}$. Similarly, each route in
  $\widecheck{\mathcal{N}}$ contains a road that is not contained in
  any other route in $\widecheck{\mathcal{N}}$.
\end{enumerate}
\noindent Clearly, the above condition implies that both matrices
$\widehat{\Gamma}$ and $\widecheck{\Gamma}$ contain a copy of the
identity $\Id_{\widehat{n}}$ or $\Id_{\widecheck{n}}$. Hence
$\widehat{\Gamma}$ and $\widecheck{\Gamma}$ are both full rank:
$\rnk \widehat{\Gamma} = \widehat{n}$ and
$\rnk \widecheck{\Gamma} = \widecheck{n}$.

Up to a renormalization, see~\cite[Section~1]{MR2020123}, we assume
that the total amount of vehicles driving along each of the networks
$\widehat{\mathcal{N}}$ and $\widecheck{\mathcal{N}}$ is $1$.
According to their choices, $\widehat{\theta}_i$ travelers of the
first population choose the route $\widehat{\gamma}_i$ for
$i \in \{1, \ldots, \widehat{n}\}$ and $\widecheck{\theta}_i$
travelers of the second population choose $\widecheck{\gamma}_i$ for
$i \in \{1, \ldots, \widecheck{n}\}$. Clearly,
$\widehat\theta \equiv (\widehat\theta_1, \ldots, \widehat\theta_n)$,
respectively
$\widecheck\theta \equiv (\widecheck\theta_1, \ldots,
\widecheck\theta_n)$, is a point in the $\widehat{n}-1$ dimensional
simplex $S^{\widehat{n}}$, respectively in the $\widecheck{n}-1$
dimensional simplex $S^{\widecheck{n}}$. We identify $\widehat\theta$
and $\widecheck\theta$ with column vectors. Thus, the total number of
vehicles along the road $r_h$ is
$\widehat{\Gamma}_h \, \widehat{\theta} + \widecheck{\Gamma}_h \,
\widecheck{\theta} = \sum_{i=1}^{\widehat{n}} \widehat{\Gamma}_{hi} \,
\widehat{\theta}_i + \sum_{i=1}^{\widecheck{n}}
\widecheck{\Gamma}_{hi} \, \widecheck{\theta}_i$.

In general, each road $r_h$ is equipped with its \emph{costs}
$\widehat{\tau}_h = \widehat{\tau}_h (\widehat\eta, \widecheck\eta)$
for the hat population and
$\widecheck{\tau}_h = \widecheck{\tau}_h (\widehat\eta,
\widecheck\eta)$ for the check population, where $\widehat\eta$,
respectively $\widecheck\eta$, is the amount of hat, respectively
check, population traveling along $r_h$. When dealing with, say,
trucks and cars they may be the costs (travel times) for each of the
two populations, for instance. In the case of populations consisting
of homogeneous vehicles differing only in their origins--destinations,
it is reasonable for example to set
$\widehat{\tau}_h = \widecheck{\tau}_h$ for all
$h \in \{1, \ldots, N\}$. Whenever the vehicles of the two populations
are homogeneous and the total numbers of vehicles of each population
are the same, it is likely that $\widehat{\tau}_h$ and
$\widecheck{\tau}_h$ are functions of the sum
$\widehat{\eta} + \widecheck{\eta}$. However, the normalization of
different total numbers of travelers to $1$ suggests to consider also
general functions
$\widehat\tau_h = \widehat\tau_h(\widehat\eta,\widecheck\eta)$ and
$\widecheck\tau_h = \widecheck\tau_h(\widehat\eta,\widecheck\eta)$.

It is of interest to allow also for the case of fully congested
roads. Therefore, we allow these costs to attain the value
$+\infty$. To this aim, we introduce the following classes of
functions.

We call a map $\tau \colon [0,1]^2 \to \reali_+ \cup \{+\infty\}$
\emph{weakly increasing in both variables} if
\begin{displaymath}
  \begin{array}{c}
    \forall\,\widehat\eta, \widecheck\eta',\widecheck\eta'' \in [0,1] \qquad
    \widecheck\eta' \leq \widecheck\eta'' \implies \tau (\widehat\eta,\widecheck\eta') \leq \tau (\widehat\eta,\widecheck\eta'') \,;
    \\
    \forall\,\widehat\eta', \widehat\eta'',\widecheck\eta \in [0,1] \qquad
    \widehat\eta' \leq \widehat\eta'' \implies \tau (\widehat\eta',\widecheck\eta) \leq \tau (\widehat\eta'',\widecheck\eta)\,.
  \end{array}
\end{displaymath}
Then, we introduce the class of continuous functions weakly increasing
in both variables attaining values in $\reali_+ \cup \{+\infty\}$:
\begin{displaymath}
  \mathcal{C}
  \coloneq
  \left\{\tau \in \C0 ([0,1]^2; \reali_+ \cup \{+\infty\})
    \colon \tau \mbox{ is weakly increasing in both variables}\right\} \,.
\end{displaymath}
Below, most results require one of the following assumptions on all
cost functions:
\begin{enumerate}[label={\bf(C\arabic*)}]
\item\label{item:E1} For all $h \in \{1, \ldots, N\}$, both travel
  times $\widehat\tau_h, \widecheck\tau_h$ are in $\mathcal{C}$ and
  admit a continuous derivative at every point $\eta$ where they
  attain finite values.
\item \label{item:E2} For all $h \in \{1, \ldots, N\}$, both travel
  times $\widehat\tau_h, \widecheck\tau_h$ are in $\mathcal{C}$ and
  are convex.
\end{enumerate}

\noindent It is then useful to introduce the maps,
\begin{equation}
  \label{eq:10}
  \begin{array}{@{}c@{}}
    \begin{array}[b]{@{}c@{\;}c@{\;}c@{\;}c@{\;}c@{}}
      \widehat\tau
      & \colon
      & [0,1]^N \times [0,1]^N
      & \to
      & \reali_+^N
      \\
      &
      & \widehat\eta, \widecheck\eta
      & \mapsto
      & \widehat\tau (\widehat\eta,\widecheck\eta)
    \end{array}
        \mbox{ where } \left(\widehat\tau (\widehat\eta,\widecheck\eta)\right)_h = \widehat\tau_h (\widehat\eta_h,\widecheck\eta_h)
        \quad \mbox{ for } \quad h \in \{1, \ldots, N\} \,;
    \\[12pt]
    \begin{array}[b]{@{}c@{\;}c@{\;}c@{\;}c@{\;}c@{}}
      \widecheck\tau
      & \colon
      & [0,1]^N \times [0,1]^N
      & \to
      & \reali_+^N
      \\
      &
      & \widehat\eta, \widecheck\eta
      & \mapsto
      & \widecheck\tau (\widehat\eta,\widecheck\eta)
    \end{array}
        \mbox{ where } \left(\widecheck\tau (\widehat\eta,\widecheck\eta)\right)_h = \widecheck\tau_h (\widehat\eta_h,\widecheck\eta_h)
        \quad \mbox{ for } \quad h \in \{1, \ldots, N\} \,.
  \end{array}
\end{equation}

\noindent To shorten the notation, we often set
$\tau \equiv (\widehat\tau,\widecheck\tau)$,
$\eta \equiv (\widehat\eta,\widecheck\eta)$ and
$\theta \equiv (\widehat\theta,\widecheck\theta)$.

\medskip

Denote by $\widehat{T}_i (\theta)$ and $\widecheck{T}_i (\theta)$ the
route travel times along the routes $\widehat\gamma_i$ and
$\widecheck\gamma_i$, while $\widehat{T} (\theta)$ and
$\widecheck{T} (\theta)$ are the vectors of all travel times of each
population, so that
\begin{equation}
  \label{eq:11}
  \begin{array}{@{}cccc@{}}
    \widehat{T} (\theta)
    \colonequals
    \widehat\Gamma^\intercal \;
    \widehat \tau
    (\widehat\Gamma \, \widehat\theta, \widecheck\Gamma \, \widecheck\theta)
    & \mbox{ and } \qquad
    & \widehat{T}_i (\theta)
      =
      \sum_{h=1}^N \widehat\Gamma_{hi} \;
      \widehat \tau_h
      (\widehat\Gamma_h \, \widehat\theta, \widecheck\Gamma_h \, \widecheck\theta)\,,
    & i \in \{1, \ldots, \widehat n\}\,;
    \\
    \widecheck{T} (\theta)
    \colonequals
    \widecheck\Gamma^\intercal \;
    \widecheck \tau
    (\widehat\Gamma \, \widehat\theta, \widecheck\Gamma \, \widecheck\theta)
    & \mbox{ and } \qquad
    & \widecheck{T}_i (\theta)
      =
      \sum_{h=1}^N \widecheck\Gamma_{hi} \;
      \widecheck \tau_h
      (\widehat\Gamma_h \, \widehat\theta, \widecheck\Gamma_h \, \widecheck\theta)\,,
    & i \in \{1, \ldots, \widecheck n\}\,.
  \end{array}
\end{equation}
From a global point of view, it is natural to evaluate the quality of
a network through the mean route travel times\footnote{Also referred
  to as \textit{average latency} of the system or \textit{social cost}
  of the network.} resulting from the partitions $\widehat\theta$ and
$\widecheck\theta$:
\begin{equation}
  \label{eq:6}
  \begin{array}{@{}c@{}}
    {\widehat{T}_M} (\theta)
    =
    \widehat\theta^\intercal \; \widehat{T} (\theta)
    \quad \mbox{ or, equivalently, } \quad
    {\widehat{T}_M} (\theta)
    \colonequals
    \sum_{i=1}^n \widehat\theta_i \; \widehat{T}_i (\theta) \,;
    \\
    {\widecheck{T}_M} (\theta)
    =
    \widecheck\theta^\intercal \; \widecheck{T} (\theta)
    \quad \mbox{ or, equivalently, } \quad
    {\widecheck{T}_M} (\theta)
    \colonequals
    \sum_{i=1}^n \widecheck\theta_i \; \widecheck{T}_i (\theta) \,;
  \end{array}
\end{equation}

Recall the following basic definition inspired
from~\cite[Definition~3.1]{Holden2016}.

\begin{definition}
  \label{def:Eq}
  Given a state $\theta \in S^n \times S^n$, we call \emph{hat
    relevant}, respectively \emph{check relevant} those route travel
  times $\widehat{T}_i (\theta)$, respectively
  $\widecheck{T}_i (\theta)$, such that $\widehat\theta_i \neq 0$,
  respectively $\widecheck\theta_i \neq 0$. A state
  $\theta^* \in S^n \times S^n$ is an \emph{equilibrium state} if all
  relevant route travel times of each population coincide, i.e.,
  \begin{displaymath}
    \begin{array}{@{}c@{}}
      \mbox{for all } i,j \in \{1, \ldots, \widehat n\} \quad
      \mbox{ if } \widehat\theta^*_i \neq 0 \mbox{ and } \widehat\theta^*_j \neq 0,
      \mbox{ then } \widehat{T}_i (\theta^*) = \widehat{T}_j (\theta^*)\,;
      \\
      \mbox{for all } i,j \in \{1, \ldots, \widecheck n\} \quad
      \mbox{ if } \widecheck\theta^*_i \neq 0 \mbox{ and } \widecheck\theta^*_j \neq 0,
      \mbox{ then } \widecheck{T}_i (\theta^*) = \widecheck{T}_j (\theta^*)\,.
    \end{array}
  \end{displaymath}
  The common value of the relevant hat/check route travel times is the
  \emph{hat/check equilibrium time}.
\end{definition}
In other words, at equilibrium all drivers of the same population need
the same time to go from that population's origin to that population's
destination. Clearly, if $\widehat\theta^*$ is an extreme point of
$S^{\widehat n}$ and $\widecheck\theta^*$ is an extreme point of
$S^{\widecheck n}$, then $\theta^*$ is an equilibrium.

% \begin{remark}
%   \label{rem:equi}
%   If $\theta^*$ is an equilibrium, then separately for the $2$
%   populations, the relevant route travel times~\eqref{eq:11} coincide:
%   \begin{displaymath}
%     \begin{array}{crcl}
%       \forall\, i,j \in \{1, \ldots, \widehat n\}
%       & \widehat\theta^*_i \neq 0
%         \mbox{ and }
%         \widehat\theta^*_j \neq 0
%       & \implies
%       & \widehat{T}_i (\theta^* ) = \widehat{T}_j (\theta^* ) \,;
%       \\
%       \forall\, i,j \in \{1, \ldots, \widecheck n\}
%       & \widecheck\theta^*_i \neq 0
%         \mbox{ and }
%         \widecheck\theta^*_j \neq 0
%       & \implies
%       & \widecheck{T}_i (\theta^* ) = \widecheck{T}_i (\theta^* ) \,.
%     \end{array}
%   \end{displaymath}
% \end{remark}

\noindent Then, clearly, at equilibrium the common value of the
relevant travel times is the mean route travel time. However, at
equilibrium, the mean route travel time needs not be optimal.

The idea of Nash equilibrium~\cite{Nash1951}, see
also~\cite[Definition~1.3]{BressanHan2013}, corresponds to a situation
where no player finds convenient to change strategy. If an equilibrium
$\theta^*$ lies along the boundary of
$S^{\widehat n} \times S^{\widecheck n}$, so that
$\widehat\theta_j^* = 0$ (or $\widecheck\theta^*_i=0$) simple
continuity considerations ensure that if
$\widehat{T}_j (\theta^*) < \widehat{T}_M (\theta^*)$ (or
$\widecheck{T}_j (\theta^*) < \widecheck{T}_M (\theta^*)$) then
passing to the $j$-th route is convenient for some hat (or check)
drivers.

\begin{definition}
  \label{def:candiate}
  An equilibrium state $\theta^*$ is a \emph{Nash equilibrium} if for
  $j \in \{1, \ldots, n\}$
  \begin{equation}
    \label{eq:9}
    \begin{array}{crcl}
      \forall\, i \in \{1, \ldots, \widehat n\}
      & \widehat\theta^*_i = 0
      & \implies
      & \widehat{T}_i (\theta^*) \geq {\widehat{T}_M} (\theta^*) \,;
      \\
      \forall\, i \in \{1, \ldots, \widecheck n\}
      & \widecheck\theta^*_i = 0
      & \implies
      & \widecheck{T}_i (\theta^*) \geq {\widecheck{T}_M} (\theta^*) \,.
    \end{array}
  \end{equation}
\end{definition}

In the search for equilibria, comparing travel times corresponding to
different distributions of drivers plays a key role. Assume that
$\epsilon$ drivers pass from the $i$-th route $\widehat \gamma_i$ to
the $j$-th route $\widehat \gamma_j$, so that $\widehat \theta$
becomes
$\widehat \theta-\epsilon \, e_i + \epsilon \, e_j$\footnote{As usual,
  $e_1, \ldots, e_n$ is the canonical basis in $\reali^n$, and we use
  the same notation in $\reali^{\widehat n}$ and
  $\reali^{\widecheck n}$.}. Then, the present framework is compatible
with the obvious observation that the travel time
$\widehat T_i (\theta)$ along the $i$-th route $\widehat \gamma_i$
does not increase.

\begin{lemma}
  \label{lem:mono}
  Let~\ref{item:E1} hold. For all $i,j \in \{1, \ldots, \widehat n\}$
  and for all $\theta \in S^{\widecheck n} \times S^{\widehat n}$,
  \begin{align}
    \label{eq:21}
    \widehat\theta_i
    >
    0
    \mbox{ and }
    \widehat{T}_i (\theta) < +\infty
    & \implies
      \forall \, \epsilon
      \mbox{ sufficiently small}
    & \widehat{T}_i (\theta)
    & \geq \widehat{T}_i (\widehat\theta - \epsilon\, e_i + \epsilon \, e_j, \widecheck\theta) \,;
    \\
    \label{eq:27}
    \widehat\theta_j
    <
    1
    \mbox{ and }
    \widehat{T}_j (\theta) < +\infty
    & \implies
      \forall \, \epsilon
      \mbox{ sufficiently small}
    & \widehat{T}_j (\theta)
    & \leq \widehat{T}_j (\widehat\theta + \epsilon\, e_j - \epsilon \, e_i,\widecheck\theta)
    \\
    \label{eq:39}
    &
    &  \mbox{ and }
    &
      \widehat{T}_j (\widehat\theta + \epsilon\, e_j - \epsilon \, e_i,\widecheck\theta) < +\infty
  \end{align}
  An entirely analogous statement holds for the check population.
\end{lemma}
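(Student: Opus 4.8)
The plan is to reduce everything to the componentwise weak monotonicity and continuity of the road costs, since the route times $\widehat T_i$ in~\eqref{eq:11} are finite sums of the $\widehat\tau_h$ over the roads making up the route. The key bookkeeping is to track how the total hat traffic $\widehat\Gamma_h\,\widehat\theta$ on each road $r_h$ changes under the transfer $\widehat\theta \mapsto \widehat\theta - \epsilon\,e_i + \epsilon\,e_j$ with $\epsilon>0$. Because $\widecheck\theta$ is untouched, the second argument of each $\widehat\tau_h$ is frozen and the first changes by exactly $\epsilon\,(\widehat\Gamma_{hj} - \widehat\Gamma_{hi})$. First I would fix $\epsilon>0$ small enough that $\widehat\theta - \epsilon\,e_i + \epsilon\,e_j$ still lies in $S^{\widehat n}$: a genuine transfer out of route $\widehat\gamma_i$ needs $\widehat\theta_i \geq \epsilon$, compatible with small $\epsilon$ exactly when $\widehat\theta_i>0$ (the hypothesis of~\eqref{eq:21}, under which $\widehat\theta_j<1$ is moreover automatic for $i\neq j$), while room at route $\widehat\gamma_j$ needs $\widehat\theta_j + \epsilon \leq 1$, compatible with small $\epsilon$ when $\widehat\theta_j<1$ (the hypothesis of~\eqref{eq:27}).

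For~\eqref{eq:21} I would restrict attention to the roads of $\widehat\gamma_i$, i.e.\ those $h$ with $\widehat\Gamma_{hi}=1$, which are the only ones contributing to $\widehat T_i$. On each such road the traffic increment $\epsilon\,(\widehat\Gamma_{hj}-1)\leq 0$, so the hat traffic weakly decreases; since $\widehat\tau_h\in\mathcal{C}$ is weakly increasing in the first variable, every summand of $\widehat T_i$ weakly decreases, and summing over the roads of $\widehat\gamma_i$ gives $\widehat T_i(\widehat\theta - \epsilon\,e_i + \epsilon\,e_j,\widecheck\theta)\leq \widehat T_i(\theta)$. Symmetrically, for~\eqref{eq:27} I would look at the roads of $\widehat\gamma_j$ (those $h$ with $\widehat\Gamma_{hj}=1$): there the increment $\epsilon\,(1-\widehat\Gamma_{hi})\geq 0$, the hat traffic weakly increases, weak monotonicity makes each summand weakly increase, and summing yields $\widehat T_j(\theta)\leq \widehat T_j(\widehat\theta + \epsilon\,e_j - \epsilon\,e_i,\widecheck\theta)$. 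Neither inequality uses anything beyond the weak monotonicity built into $\mathcal{C}$.

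The only genuinely delicate point is the finiteness claim~\eqref{eq:39}, and this is where continuity rather than monotonicity enters. The hypothesis $\widehat T_j(\theta)<+\infty$ forces each $\widehat\tau_h$ with $\widehat\Gamma_{hj}=1$ to take a finite value at the current argument $(\widehat\Gamma_h\,\widehat\theta,\widecheck\Gamma_h\,\widecheck\theta)$. Viewing each such $\widehat\tau_h$ as a continuous map into $\reali_+\cup\{+\infty\}$ with its natural topology, in which the neighborhoods of $+\infty$ are the sets $(M,+\infty]$, a point of finiteness admits a neighborhood on which the value stays finite, since the preimage of a bounded open interval around that finite value is open and contains the point. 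Hence there is $\epsilon_0>0$ such that for all $\epsilon\in(0,\epsilon_0)$ every such $\widehat\tau_h$ remains finite at the perturbed argument, and a finite sum of finite terms is finite, giving~\eqref{eq:39}. Taking $\epsilon$ below the minimum of the simplex bound and this $\epsilon_0$ makes all three conclusions hold simultaneously.

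The statement for the check population then follows verbatim by exchanging the roles of the hat and check data: a transfer of $\epsilon$ check drivers from $\widecheck\gamma_i$ to $\widecheck\gamma_j$ leaves $\widehat\theta$ fixed and perturbs only the second argument of each $\widecheck\tau_h$, after which the same monotonicity-and-continuity argument applies. I expect no real obstacle beyond handling the extended-real-valued topology carefully in~\eqref{eq:39}; the inequalities~\eqref{eq:21} and~\eqref{eq:27} are immediate from the definition of $\mathcal{C}$, and in particular the differentiability assumed in~\ref{item:E1} is not needed for this lemma.
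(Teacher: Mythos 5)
Your proof is correct, but it takes a genuinely different route from the paper's. The paper proves~\eqref{eq:21} by calculus: it invokes Lemma~\ref{lem:diff} (which uses the differentiability assumed in~\ref{item:E1} to write $\widehat{T}_i (\theta) - \widehat{T}_i (\widehat\theta - \epsilon\, e_i + \epsilon\, e_j, \widecheck\theta)$ as $\epsilon \left( (\widehat\Gamma^\intercal \, \widehat Q \, \widehat\Gamma)_{ii} - (\widehat\Gamma^\intercal \, \widehat Q \, \widehat\Gamma)_{ij}\right)$, with $\widehat Q$ a diagonal matrix of integrated partial derivatives) and then Lemma~\ref{lem:gamma} to conclude that this quantity is non-negative; inequality~\eqref{eq:27} is treated analogously and~\eqref{eq:39} is dispatched with a one-line appeal to continuity. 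You instead argue road by road, using only that each $\widehat\tau_h \in \mathcal{C}$ is weakly increasing in the hat variable: under the transfer, the hat traffic on every road of $\widehat\gamma_i$ changes by $\epsilon\,(\widehat\Gamma_{hj}-1) \leq 0$ and on every road of $\widehat\gamma_j$ by $\epsilon\,(1-\widehat\Gamma_{hi}) \geq 0$, so each summand of the relevant route time moves in the right direction. This buys something the paper's proof does not: your inequalities hold under mere membership in $\mathcal{C}$, so the differentiability part of~\ref{item:E1} is indeed not needed, and your argument does not rely on finiteness to run, whereas Lemma~\ref{lem:diff} is only stated when the left-hand sides are finite. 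What the paper's route buys in exchange is uniformity: the pair Lemma~\ref{lem:diff}/Lemma~\ref{lem:gamma} is the engine reused in Lemma~\ref{lem:monotone}, Theorem~\ref{thm:convex}, Proposition~\ref{prop:unique0} and Theorem~\ref{thm:uniqueness}, so deriving Lemma~\ref{lem:mono} from it costs nothing extra there. Your handling of~\eqref{eq:39} via the topology of $\reali_+ \cup \{+\infty\}$ (finiteness at a point propagates to a neighborhood, and a finite sum of finite terms is finite) is a more explicit, and correct, version of the paper's ``follows by continuity''.
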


\noindent A further general monotonicity property is provided by
Lemma~\ref{lem:monotone}.

In the literature, see e.g.~\cite[Definition~2.2.1]{Roughgarden2005},
Nash equilibria are also related to a small percentage of players
changing strategy. Differently from~\eqref{eq:21}--\eqref{eq:27}, this
condition requires a comparison between travel times along different
routes, as defined below. The following definition is inspired, for
instance, by~\cite[Definition~3.3]{Holden2016}
or~\cite[Definition~2.1]{Roughgarden200093}.

\begin{definition}
  \label{def:Nash}
  An equilibrium state
  $\theta^* \in S^{\widehat n} \times S^{\widecheck n}$ is an
  \emph{$\epsilon$--Nash Equilibrium} if for all sufficiently small
  $\epsilon$ and
  \begin{displaymath}
    \begin{array}{@{}c@{}}
      \mbox{if }
      i,j \in \{1, \ldots, \widehat n\}
      \mbox{ and }
      \widehat\theta^* - \epsilon \, e_i + \epsilon \, e_j \in S^{\widehat n}
      \quad \mbox{ then } \quad
      \widehat{T}_j (\widehat\theta^* - \epsilon \, e_i + \epsilon \, e_j,\widecheck\theta^*)
      \geq
      \widehat{T}_i (\theta^*) \,;
      \\
      \mbox{if }
      i,j \in \{1, \ldots, \widecheck n\}
      \mbox{ and }
      \widecheck\theta^* - \epsilon \, e_i + \epsilon \, e_j \in S^{\widecheck n}
      \quad \mbox{ then } \quad
      \widecheck{T}_j (\widehat\theta^*, \widecheck\theta^* - \epsilon \, e_i + \epsilon \, e_j)
      \geq
      \widecheck{T}_i (\theta^*) \,.
    \end{array}
  \end{displaymath}
\end{definition}

\noindent In other words, for $\epsilon$ drivers there is no gain in
changing from route $\widehat\gamma_i$ to route
$\widehat\gamma_j$. Note that in the literature, the term
\emph{``$\epsilon$--Nash Equilibrium''} often refers to approximate Nash equilibria, where no player would lower its cost by more than $\epsilon$ when changing strategy, see~\cite[\S~2.6.6]{NisaRougTardVazi07}.

\goodbreak

The following theorem ensures the equivalence of
Definition~\ref{def:candiate} and Definition~\ref{def:Nash} in all
cases of interest.

\begin{theorem}
  \label{thm:convex}
  If travel times are continuous, $\epsilon$--Nash equilibria are also
  Nash equilibria. Moreover, either of the conditions~\ref{item:E1}
  or~\ref{item:E2} ensures that any Nash equilibrium is also an
  $\epsilon$--Nash equilibrium.
\end{theorem}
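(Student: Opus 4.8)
The plan is to prove the two implications separately, in both cases leaning on the fact recorded just before Definition~\ref{def:candiate}: at an equilibrium $\theta^*$ the mean hat route travel time $\widehat{T}_M(\theta^*)$ equals the common value of the relevant hat route travel times, because $\widehat{T}_M(\theta^*)=\sum_i \widehat\theta^*_i\,\widehat{T}_i(\theta^*)$ is a convex combination supported on the relevant routes, all of which share this value (and symmetrically for the check population). I will freely replace the ``equilibrium time'' by $\widehat{T}_M(\theta^*)$ throughout.

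For the first assertion, let $\theta^*$ be an $\epsilon$--Nash equilibrium and fix an empty hat route, say $\widehat\theta^*_k=0$. Since $\widehat\theta^*\in S^{\widehat n}$, at least one route $\ell$ is relevant, $\widehat\theta^*_\ell>0$. For all sufficiently small $\epsilon>0$ the state $\widehat\theta^*-\epsilon\,e_\ell+\epsilon\,e_k$ lies in $S^{\widehat n}$, so Definition~\ref{def:Nash} applied with the roles $i=\ell$, $j=k$ gives $\widehat{T}_k(\widehat\theta^*-\epsilon\,e_\ell+\epsilon\,e_k,\widecheck\theta^*)\geq \widehat{T}_\ell(\theta^*)=\widehat{T}_M(\theta^*)$. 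Letting $\epsilon\to 0^+$ and using the continuity of $\widehat{T}_k$, which follows from the continuity of the maps $\widehat\tau_h$ into $\reali_+\cup\{+\infty\}$, the left-hand side tends to $\widehat{T}_k(\theta^*)$, whence $\widehat{T}_k(\theta^*)\geq \widehat{T}_M(\theta^*)$. This is the hat part of~\eqref{eq:9}; the check part is identical. This half uses continuity only, consistently with the statement.

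For the converse, let $\theta^*$ be a Nash equilibrium and fix indices $i,j$ such that $\widehat\theta^*-\epsilon\,e_i+\epsilon\,e_j\in S^{\widehat n}$ for some $\epsilon>0$; this forces $\widehat\theta^*_i\geq\epsilon$, so $i$ is relevant and $\widehat{T}_i(\theta^*)=\widehat{T}_M(\theta^*)$. The crux is a monotonicity observation: shifting $\epsilon$ hat drivers from $\widehat\gamma_i$ onto $\widehat\gamma_j$ cannot decrease the travel time along $\widehat\gamma_j$, since on each road of $\widehat\gamma_j$ the hat traffic is either unchanged (roads shared with $\widehat\gamma_i$) or raised by $\epsilon$ (roads of $\widehat\gamma_j$ outside $\widehat\gamma_i$), while $\widecheck\theta^*$ is frozen and every $\widehat\tau_h$ is weakly increasing in its own variable. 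Under~\ref{item:E1} this is precisely~\eqref{eq:27} of Lemma~\ref{lem:mono}; under~\ref{item:E2} the cost functions, belonging to $\mathcal{C}$, are still weakly increasing, so the same conclusion holds verbatim. In either case $\widehat{T}_j(\widehat\theta^*-\epsilon\,e_i+\epsilon\,e_j,\widecheck\theta^*)\geq \widehat{T}_j(\theta^*)$. It remains to bound $\widehat{T}_j(\theta^*)$ below by $\widehat{T}_M(\theta^*)$: if $j$ is relevant this is the equilibrium property, while if $\widehat\theta^*_j=0$ it is the Nash inequality~\eqref{eq:9}. Chaining the two estimates gives $\widehat{T}_j(\widehat\theta^*-\epsilon\,e_i+\epsilon\,e_j,\widecheck\theta^*)\geq \widehat{T}_M(\theta^*)=\widehat{T}_i(\theta^*)$, which is the requirement of Definition~\ref{def:Nash}; the check population is symmetric.

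The only place the regularity hypotheses enter is in securing the monotonicity used in the converse, and the main technical care concerns the value $+\infty$: when $\widehat{T}_j(\theta^*)$ is finite one needs the perturbed travel time to stay finite, so that the comparison is genuine, which is again guaranteed by continuity in $\mathcal{C}$; the degenerate case $\widehat{T}_M(\theta^*)=+\infty$ is immediate, since then all perturbed quantities are $+\infty$ as well. I therefore expect the delicate point to be not the chain of inequalities but the verification, uniform over~\ref{item:E1} and~\ref{item:E2}, of the weak monotonicity together with the finiteness bookkeeping; for~\ref{item:E1} this is already encapsulated in Lemma~\ref{lem:mono}, and the more general monotonicity of Lemma~\ref{lem:monotone} may be invoked to cover both hypotheses at once.
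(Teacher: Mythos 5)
Your proof is correct, and its skeleton coincides with the paper's: the first implication is obtained by the same limiting procedure (which the paper dispatches in a single sentence), and the converse is reduced, exactly as in the paper, to the two estimates $\widehat{T}_i (\theta^*) \leq \widehat{T}_j (\theta^*)$ (equilibrium property plus~\eqref{eq:9}) and $\widehat{T}_j (\widehat\theta^* - \epsilon\, e_i + \epsilon\, e_j, \widecheck\theta^*) \geq \widehat{T}_j (\theta^*)$. Where you genuinely depart from the paper is in the proof of this second, crucial estimate. The paper splits into cases: under~\ref{item:E1} it invokes the integral representation of Lemma~\ref{lem:diff} to write the difference as ${e_j}^\intercal \, \widehat\Gamma^\intercal \, \widehat Q \, \widehat\Gamma \, (-\epsilon \, e_i + \epsilon \, e_j)$, while under~\ref{item:E2} it replaces $\widehat Q$ by a diagonal matrix of subgradients; in both cases non-negativity then follows from the combinatorial Lemma~\ref{lem:gamma}. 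You instead observe that on every road of $\widehat\gamma_j$ the hat traffic either is unchanged (roads shared with $\widehat\gamma_i$) or increases by $\epsilon$, so the weak monotonicity of each $\widehat\tau_h$ --- already built into membership in $\mathcal{C}$, hence available under either hypothesis --- gives the inequality road by road, with no case split, no differentiability, no convexity, and no finiteness bookkeeping, since the inequality holds in $\reali_+ \cup \{+\infty\}$. This is more elementary and slightly more general: it shows that the converse implication holds for \emph{any} costs in $\mathcal{C}$, which is consistent with the paper's own counterexample (there $\tau_2 (\eta) = 3 - \eta$ fails monotonicity), and it subsumes~\eqref{eq:27} of Lemma~\ref{lem:mono} without its finiteness hypothesis. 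The paper's heavier route is natural in context only because Lemmas~\ref{lem:diff} and~\ref{lem:gamma} are needed elsewhere (e.g., in Proposition~\ref{prop:GlobNash} and Theorem~\ref{thm:uniqueness}); as a self-contained proof of Theorem~\ref{thm:convex}, yours is cleaner.
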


\noindent The extension to $\widehat\tau_h \in \C1$ and
$\widecheck\tau_h$ convex for all $h$, or \emph{vice versa}, is
immediate.

Intuitive connections between globally optimal configurations and Nash
equilibria are confirmed and formalized by the next Proposition.

\begin{proposition}
  \label{prop:GlobNash}
  Let~\ref{item:E1} hold.
  \begin{enumerate}[label={\bf(NE\arabic*)}]
  \item\label{it:N1} If
    $\theta^* \in S^{\widehat n} \times S^{\widecheck n}$ is an
    equilibrium such that
    \begin{equation}
      \label{eq:29}
      \widehat{T}_M (\theta^*)
      =
      \min_{\widehat\theta\in S^{\widehat n}} \widehat{T}_M (\widehat\theta, \widecheck\theta^*)
      \quad \mbox{ and } \quad
      \widecheck{T}_M (\theta^*)
      =
      \min_{\widecheck\theta\in S^{\widecheck n}} \widecheck{T}_M (\widehat\theta^*, \widecheck\theta)
    \end{equation}
    then $\theta^*$ is a Nash equilibrium.
  \item\label{it:N2} If both $\widehat\theta^*$ and
    $\widecheck\theta^*$ are extreme points of $S^{\widehat n}$ and
    $S^{\widecheck n}$ satisfying~\eqref{eq:29}, then $\theta^*$ is a
    Nash equilibrium.
  \end{enumerate}
\end{proposition}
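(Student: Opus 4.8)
The plan is to establish both~\ref{it:N1} and~\ref{it:N2} by contradiction, pitting the minimality~\eqref{eq:29} against a one--parameter family of competing distributions obtained by shifting an infinitesimal mass onto a route suspected of being ``too fast''. I argue throughout for the hat population, the check one being symmetric. Since at an equilibrium all relevant route times equal the common value $\widehat{T}_M(\theta^*)$, a failure of~\eqref{eq:9} means precisely that there is an index $j$ with $\widehat\theta^*_j=0$ and $\widehat{T}_j(\theta^*)<\widehat{T}_M(\theta^*)$; the goal is to rule this out. If $\widehat{T}_j(\theta^*)=+\infty$ the Nash inequality is trivial, so I may assume the relevant times finite and use the continuity of the travel times freely.

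First I would dispatch the vertex case~\ref{it:N2}. Here $\widehat\theta^*=e_{i_0}$, the state $\theta^*$ is automatically an equilibrium, and $\widehat{T}_M(\theta^*)=\widehat{T}_{i_0}(\theta^*)$. Fixing an unused $j$ and moving $\epsilon$ drivers from $i_0$ to $j$, i.e. taking $\widehat\theta'=\widehat\theta^*-\epsilon\,e_{i_0}+\epsilon\,e_j$, one has $\widehat{T}_M(\widehat\theta')=(1-\epsilon)\,\widehat{T}_{i_0}(\widehat\theta')+\epsilon\,\widehat{T}_j(\widehat\theta')$, while~\eqref{eq:21} of Lemma~\ref{lem:mono} gives $\widehat{T}_{i_0}(\widehat\theta')\le\widehat{T}_{i_0}(\theta^*)$. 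Combined with the minimality~\eqref{eq:29}, these yield
\begin{displaymath}
  \widehat{T}_{i_0}(\theta^*)\ \le\ \widehat{T}_M(\widehat\theta')\ \le\ (1-\epsilon)\,\widehat{T}_{i_0}(\theta^*)+\epsilon\,\widehat{T}_j(\widehat\theta') \,,
\end{displaymath}
so that $\widehat{T}_{i_0}(\theta^*)\le\widehat{T}_j(\widehat\theta')$; letting $\epsilon\to0^+$ and using continuity gives $\widehat{T}_j(\theta^*)\ge\widehat{T}_{i_0}(\theta^*)=\widehat{T}_M(\theta^*)$, which is the Nash condition.

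For the general statement~\ref{it:N1} I would run the analogous perturbation from a used route $i$ (so $\widehat\theta^*_i>0$) to the suspected route $j$, with $\widehat\theta'=\widehat\theta^*-\epsilon\,e_i+\epsilon\,e_j$. Differentiating under~\ref{item:E1}, the minimality~\eqref{eq:29} forces the one--sided derivative of $\epsilon\mapsto\widehat{T}_M(\widehat\theta')$ at $0$ to be nonnegative; using $\widehat{T}_i(\theta^*)=\widehat{T}_M(\theta^*)$ this reads
\begin{displaymath}
  \bigl(\widehat{T}_j(\theta^*)-\widehat{T}_M(\theta^*)\bigr)
  +\sum_{h=1}^N\bigl(\widehat\Gamma_{hj}-\widehat\Gamma_{hi}\bigr)\,\bigl(\widehat\Gamma_h\,\widehat\theta^*\bigr)\,\partial\widehat\tau_h\ \ge\ 0 \,,
\end{displaymath}
where $\partial\widehat\tau_h$ denotes the derivative of $\widehat\tau_h$ in its hat argument at $\theta^*$, nonnegative by weak monotonicity. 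The first bracket is exactly the quantity I need to be nonnegative; the obstacle is the second, \emph{externality}, sum.

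The crux — and the step I expect to be hardest — is therefore to pass from this \emph{marginal--cost} optimality to the bare travel--time comparison required by Nash, i.e. to dominate the externality carried by the unused route. Writing $E_i=\sum_h\widehat\Gamma_{hi}\,\bigl(\widehat\Gamma_h\widehat\theta^*\bigr)\,\partial\widehat\tau_h$, the minimality makes the marginal costs of all used routes coincide, so that $E_i$ takes one common value $E^\star$ across them, and the display above reduces the Nash inequality for $j$ to the single bound $E_j\le E^\star$. To secure it I would exploit $\partial\widehat\tau_h\ge0$ together with assumption~\ref{it:IR}: the private road of $j$ carries zero hat flow and so contributes nothing to $E_j$, while every other road of $j$ is shared with some used route whose common externality is $E^\star$. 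Balancing these shared--road contributions is the delicate bookkeeping on which the argument hinges; should a direct sign analysis prove elusive, the fallback is to abandon the first--order expansion in favour of the global comparison $\widehat{T}_M(\theta^*)\le\widehat{T}_M(\widehat\theta,\widecheck\theta^*)$ tested against suitably chosen $\widehat\theta$, and to attempt to close the estimate with the monotonicity of Lemma~\ref{lem:mono} as in the vertex case.
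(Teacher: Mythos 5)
Your proof of \ref{it:N2} is correct, and it is not the paper's proof: the paper obtains \ref{it:N2} as a corollary of \ref{it:N1}, whereas you give a self-contained argument (the two-term convex combination for $\widehat{T}_M$, the monotonicity \eqref{eq:21} of Lemma~\ref{lem:mono} applied to the single used route $i_0$, minimality \eqref{eq:29}, then $\epsilon\to0^+$ by continuity). This independence matters, because the interior case is where all the trouble lies.

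For \ref{it:N1}, the step you flag --- the bound $E_j\le E^\star$ in your notation --- is a genuine gap and cannot be closed: the bound is false in general, and so is the reduction target itself. Drop the hats and make the check population irrelevant (e.g.\ let it travel the same network with all-constant costs, so the second half of \eqref{eq:29} and the check part of Definition~\ref{def:Eq} are trivial). Take the classical Braess network: roads $r_1,\dots,r_5$ with $\tau_1(\eta)=\eta$, $\tau_2(\eta)=1$, $\tau_3(\eta)=1$, $\tau_4(\eta)=\eta$, $\tau_5(\eta)=0$, and routes $\gamma_1=(r_1,r_2)$, $\gamma_2=(r_3,r_4)$, $\gamma_3=(r_1,r_5,r_4)$; both \ref{item:E1} and \ref{it:IR} hold. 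On the simplex one computes $T_M(\theta)=\theta_1^2+\theta_2^2+\theta_3+1$, whose unique minimizer is $\theta^*=(1/2,1/2,0)$; moreover $T_1(\theta^*)=T_2(\theta^*)=3/2$, so $\theta^*$ is an equilibrium satisfying \eqref{eq:29}. Yet $E_1=E_2=E^\star=1/2$ while $E_3=1>E^\star$, because the unused route $\gamma_3$ collects the externalities of both congested roads $r_1$ and $r_4$; correspondingly $T_3(\theta^*)=1<3/2=T_M(\theta^*)$, so \eqref{eq:9} fails and $\theta^*$ is not a Nash equilibrium in the sense of Definition~\ref{def:candiate}. Your first-order reduction is accurate (here $T_3-T_M+E_3-E^\star=0$, consistent with minimality), but it terminates at an inequality that is false; and since in this example the hypotheses of \ref{it:N1} hold while its conclusion fails, no bookkeeping --- and no fallback --- can repair the argument at this level of generality.

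The same example shows why your fallback (global comparison plus Lemma~\ref{lem:mono}, as in your vertex argument) also collapses, and it localizes the corresponding weak point in the paper's own proof. Moving $\epsilon$ drivers from a used route $i$ onto $j$ changes the travel times of the \emph{other} used routes $\ell\neq i,j$ whenever they share a road with $j$ (here $T_2$ increases, since $\gamma_2$ shares $r_4$ with $\gamma_3$), and these cross terms enter the comparison of the means with the unfavorable sign; the paper's proof sets them to zero, asserting $\widehat{T}_\ell(\widehat\theta^\epsilon,\widecheck\theta^*)=\widehat{T}_\ell(\theta^*)$ ``by Definition~\ref{def:Eq}'', which is precisely what fails here. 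Both your argument and the paper's do go through under the extra structural assumption that distinct routes of the same population share no road: then an unused route's roads carry none of that population's flow, so $E_j=0\le E^\star$, and the perturbation leaves every $T_\ell$, $\ell \neq i,j$, untouched. Your proof of \ref{it:N2} survives in full generality precisely because at an extreme point there are no other used routes to disturb.
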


\noindent The following result, inspired
by~\cite[Theorem~1]{Nash1951}, is based on Brouwer Fixed Point
Theorem, see e.g.~\cite[Theorem~1.6.2]{MR0488102}. Continuity of the
road travel times suffices to ensure the existence of Nash equilibria.

\begin{theorem}
  \label{thm:existence}
  Let all road travel times
  $(\widehat\tau_1,\widecheck\tau_1), \ldots,
  (\widehat\tau_N,\widecheck\tau_N)$ be in
  $\C0 ([0,1]^2; (\reali_+ \cup\{+\infty\})^2)$. Then, there exists a
  Nash equilibrium
  $\theta^* \in S^{\widehat n}\times S^{\widecheck n}$, in the sense
  of Definition~\ref{def:candiate}.
\end{theorem}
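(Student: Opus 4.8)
The plan is to realise Nash equilibria as fixed points of a Nash-type improvement map and to invoke Brouwer's theorem \cite[Theorem~1.6.2]{MR0488102}, in the spirit of \cite{Nash1951}. Since the theorem assumes only continuity (neither \ref{item:E1} nor \ref{item:E2}), I would not use Lemma~\ref{lem:mono}; the single genuine difficulty is that route travel times may attain $+\infty$, which I would circumvent by a truncation-and-limit argument.

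First I would remove infinities by truncating. For $k \in \naturali$ set $\widehat\tau_h^{(k)} \colonequals \min\{\widehat\tau_h, k\}$ and $\widecheck\tau_h^{(k)} \colonequals \min\{\widecheck\tau_h, k\}$; since the $\tau_h$ are continuous into $\reali_+ \cup \{+\infty\}$, each truncation is continuous and bounded, and the route times $\widehat T_i^{(k)}, \widecheck T_i^{(k)}$ defined through \eqref{eq:11} are then continuous and finite on the compact convex set $S^{\widehat n} \times S^{\widecheck n}$. There I would define $\phi^{(k)} = (\widehat\phi^{(k)}, \widecheck\phi^{(k)})$ by
\[
  \widehat\phi_i^{(k)}(\theta) = \frac{\widehat\theta_i + \widehat g_i^{(k)}(\theta)}{1 + \sum_{l=1}^{\widehat n} \widehat g_l^{(k)}(\theta)}, \qquad \widehat g_i^{(k)}(\theta) = \max\{0,\ \widehat T_M^{(k)}(\theta) - \widehat T_i^{(k)}(\theta)\},
\]
and symmetrically for the check component. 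Each $\phi^{(k)}$ is continuous (the denominators are $\geq 1$) and maps $S^{\widehat n} \times S^{\widecheck n}$ into itself, so Brouwer's theorem yields a fixed point $\theta^{*,k}$.

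Next I would show that any fixed point of $\phi^{(k)}$ is a Nash equilibrium of the truncated network. The fixed-point relation gives $\widehat\theta_i^{*,k}\, G_k = \widehat g_i^{(k)}(\theta^{*,k})$ with $G_k = \sum_l \widehat g_l^{(k)}(\theta^{*,k}) \geq 0$. As $\widehat T_M^{(k)}$ is a convex combination of the $\widehat T_i^{(k)}$, some index $p$ in the support of $\widehat\theta^{*,k}$ satisfies $\widehat T_p^{(k)} \geq \widehat T_M^{(k)}$, whence $\widehat g_p^{(k)} = 0$ and therefore $G_k = 0$; thus all gains vanish, which is precisely $\widehat T_i^{(k)}(\theta^{*,k}) \geq \widehat T_M^{(k)}(\theta^{*,k})$ for every $i$, with equality on the support of $\widehat\theta^{*,k}$. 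The same holds for the check population.

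Finally, the limit. By compactness I would pass to a subsequence with $\theta^{*,k} \to \theta^*$. Since the truncations increase to $\tau_h$, a squeeze using $\widehat T_i^{(m)} \leq \widehat T_i^{(k)} \leq \widehat T_i$ for $m \leq k$, together with the continuity of each bounded $\widehat T_i^{(m)}$ and of the extended-valued $\widehat T_i$, gives $\widehat T_i^{(k)}(\theta^{*,k}) \to \widehat T_i(\theta^*)$ in $\reali_+ \cup \{+\infty\}$ for every $i$, and likewise for check. Choosing any relevant route $l$ (one with $\widehat\theta_l^* > 0$, which exists as the masses sum to $1$), the support equality at level $k$ gives $\widehat T_M^{(k)}(\theta^{*,k}) = \widehat T_l^{(k)}(\theta^{*,k})$ for large $k$, so the truncated means converge to the common relevant value $\widehat T_M(\theta^*)$. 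Passing to the limit in $\widehat T_i^{(k)}(\theta^{*,k}) \geq \widehat T_M^{(k)}(\theta^{*,k})$ then yields $\widehat T_i(\theta^*) \geq \widehat T_M(\theta^*)$ for all $i$ and equality among relevant routes, i.e.\ the conditions of Definition~\ref{def:Eq} and \eqref{eq:9}; the check inequalities follow identically, so $\theta^*$ is the desired Nash equilibrium. I expect the main obstacle to be controlling the possible blow-up of the mean $\widehat T_M$ along the approximating sequence, and the device of reading off its limit from a single relevant route is exactly what renders the passage to infinite costs harmless.
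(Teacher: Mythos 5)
Your proof is correct, and while it shares the paper's overall strategy (a Nash--type improvement map plus Brouwer, following~\cite{Nash1951}), the technical implementation is genuinely different in the one place that matters: the treatment of infinite costs. The paper never truncates; it composes every travel time with the increasing bijection $\bar\phi$ of $\reali_+\cup\{+\infty\}$ onto $[0,1]$ from~\eqref{eq:40new} (note the intended formula is $\xi/(1+\xi)$, the printed $\xi/(1-\xi)$ being a typo), so that infinities are tamed once and for all, and then applies Brouwer a single time to the projected--descent map $\mathcal{F}$ of~\eqref{eq:15}, which shifts $\widehat\theta$ against the deviation of $(\phi\circ\widehat T)(\theta)$ from its mean, with the step $\lambda$ in~\eqref{eq:2} chosen so that the renormalization $\mathcal{N}$ never degenerates; the equilibrium property of the fixed point is then recovered through a same--sign argument (Claims~2--4) and pulled back to the true travel times via the strict monotonicity of $\bar\phi$. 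You instead keep the original costs, truncate at level $k$, use the classical Nash gain map $\widehat g_i^{(k)}=\max\{0,\widehat T_M^{(k)}-\widehat T_i^{(k)}\}$ for each $k$, and pass to the limit. What your route buys: fixed points of the gain map are identified as equilibria of the truncated network by the standard two--line support argument, and measuring gains against $\widehat T_M^{(k)}$ delivers~\eqref{eq:9} in exactly the form stated in Definition~\ref{def:candiate}. What it costs: the subsequence extraction, the squeeze $\widehat T_i^{(m)}(\theta^{*,k})\leq \widehat T_i^{(k)}(\theta^{*,k})\leq \widehat T_i(\theta^{*,k})$ (which correctly yields $\widehat T_i^{(k)}(\theta^{*,k})\to\widehat T_i(\theta^*)$ in $\reali_+\cup\{+\infty\}$, using continuity of the extended--valued $\widehat T_i$ and monotone convergence of the truncations), and the final identification of $\lim_k \widehat T_M^{(k)}(\theta^{*,k})$ with the common relevant value --- your device of reading this limit off a single route $l$ with $\widehat\theta_l^*>0$ is exactly right, and since there are finitely many routes the support equalities hold simultaneously for all large $k$, so all relevant limits coincide and equal $\widehat T_M(\theta^*)$ even when that value is $+\infty$. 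In short: the paper's compactification avoids any limiting procedure but requires designing and verifying the somewhat delicate map~\eqref{eq:15} (the non--vanishing denominator of Claim~1, the convex--combination sign argument of Claim~4); your truncation scheme uses only textbook ingredients at each level $k$ but transfers the difficulty into the limit $k\to\infty$, which you resolve correctly.
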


We now proceed to present a condition satisfied by multiple concurrent
Nash equilibria. As shown in \S~\ref{sec:examples}, it is a useful
tool to prove the uniqueness of Nash equilibria.

\begin{proposition}
  \label{prop:unique0}
  Let~\ref{item:E1} hold.  If $\theta'$ and $\theta''$ are Nash
  equilibria, then
  \begin{equation}
    \label{eq:1}
    (\widehat\theta'' - \widehat\theta')^\intercal \, \left(\widehat{T}
      (\theta'') - \widehat{T} (\theta')\right)
    =
    0
    \quad \mbox{ and } \quad
    (\widecheck\theta'' - \widecheck\theta')^\intercal \,
    \left(\widecheck{T} (\theta'') - \widecheck{T} (\theta')\right)
    =
    0 \,.
  \end{equation}
\end{proposition}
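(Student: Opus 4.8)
The plan is to read~\eqref{eq:1} as the two-equilibria instance of a variational inequality and to squeeze each of the two bilinear forms between $0$ from above, using that $\theta'$ and $\theta''$ are Nash equilibria, and $0$ from below, using the monotonicity of the travel-time maps supplied by Lemma~\ref{lem:monotone}.

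First I would record the variational characterisation of a Nash equilibrium. If $\theta^\ast$ is an equilibrium in the sense of Definition~\ref{def:candiate}, then by Definition~\ref{def:Eq} every relevant hat route has travel time equal to the common hat equilibrium time, which by~\eqref{eq:6} equals $\widehat T_M(\theta^\ast)=\widehat\theta^{\ast\intercal}\,\widehat T(\theta^\ast)$, while~\eqref{eq:9} forces every non-relevant hat route to have travel time at least $\widehat T_M(\theta^\ast)$. Hence $\widehat T_i(\theta^\ast)\ge\widehat T_M(\theta^\ast)$ for all $i$, with equality whenever $\widehat\theta^\ast_i>0$, so that for every $\widehat\theta\in S^{\widehat n}$,
\begin{displaymath}
  \widehat\theta^\intercal\,\widehat T(\theta^\ast)\ge\widehat T_M(\theta^\ast)=\widehat\theta^{\ast\intercal}\,\widehat T(\theta^\ast),
  \qquad\text{i.e.}\qquad
  (\widehat\theta-\widehat\theta^\ast)^\intercal\,\widehat T(\theta^\ast)\ge 0,
\end{displaymath}
and symmetrically for the check population. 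A preliminary remark is needed here: these manipulations presuppose that the relevant equilibrium times are finite, so under~\ref{item:E1} I would first exclude the degenerate case $\widehat T_M=+\infty$ (resp. $\widecheck T_M=+\infty$), ensuring that all the scalar products below are genuine real numbers and not undefined differences of $+\infty$.

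Testing the displayed inequality for $\theta'$ at the feasible point $\widehat\theta''$, and that for $\theta''$ at $\widehat\theta'$, gives $(\widehat\theta''-\widehat\theta')^\intercal\,\widehat T(\theta')\ge 0$ and $(\widehat\theta''-\widehat\theta')^\intercal\,\widehat T(\theta'')\le 0$; subtracting, the hat bilinear form in~\eqref{eq:1} is $\le 0$, and the identical computation for the check population shows the check bilinear form is $\le 0$ as well. This is the easy half, and it uses nothing beyond Definition~\ref{def:candiate}.

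For the reverse inequality I would pass to the road flows $\widehat\eta'=\widehat\Gamma\,\widehat\theta'$, $\widecheck\eta'=\widecheck\Gamma\,\widecheck\theta'$ and likewise for $\theta''$, and rewrite, via~\eqref{eq:11}, the sum of the hat and check bilinear forms as
\begin{displaymath}
  \sum_{h=1}^{N}\Big[(\widehat\eta''_h-\widehat\eta'_h)\big(\widehat\tau_h(\widehat\eta''_h,\widecheck\eta''_h)-\widehat\tau_h(\widehat\eta'_h,\widecheck\eta'_h)\big)+(\widecheck\eta''_h-\widecheck\eta'_h)\big(\widecheck\tau_h(\widehat\eta''_h,\widecheck\eta''_h)-\widecheck\tau_h(\widehat\eta'_h,\widecheck\eta'_h)\big)\Big].
\end{displaymath}
Lemma~\ref{lem:monotone} guarantees that this total is $\ge 0$; since each of its two summands was just shown to be $\le 0$ while their sum is $\ge 0$, both summands vanish, which is exactly~\eqref{eq:1}. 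I expect this last step to be the crux. For a single population with own-flow-dependent costs the non-negativity is immediate, since then the $h$-th term equals $(\widehat\eta''_h-\widehat\eta'_h)(\widehat\tau_h(\widehat\eta''_h)-\widehat\tau_h(\widehat\eta'_h))\ge 0$ by the weak monotonicity of $\widehat\tau_h$. Here, however, each road cost depends on \emph{both} populations' flows, so the cross terms need not be favourably signed term by term, and the non-negativity of the total is a genuine property of the coupled system --- this is precisely where the structural content of Lemma~\ref{lem:monotone} is used. The same reasoning also dispatches the finiteness issue flagged above: once the total is known to be finite and non-negative, the individual forms cannot be $-\infty$, so any $+\infty$ entries of $\widehat T(\theta'')$ or $\widehat T(\theta')$ occur only on routes carrying a non-positive coefficient and do not spoil the conclusion.
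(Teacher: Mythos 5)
Your first half is correct, and it is in substance the paper's \emph{entire} proof in different clothing: the variational inequality $(\widehat\theta-\widehat\theta^\ast)^\intercal\,\widehat T(\theta^\ast)\ge 0$ for all $\widehat\theta\in S^{\widehat n}$ is just Definition~\ref{def:candiate} restated, and testing it at the other equilibrium reproduces exactly what the paper obtains through its index-set decomposition $\widehat I(\theta')$, $\widehat I(\theta'')$, namely the one-sided bounds
\begin{displaymath}
  (\widehat\theta''-\widehat\theta')^\intercal\left(\widehat T(\theta'')-\widehat T(\theta')\right)\le 0\,,
  \qquad
  (\widecheck\theta''-\widecheck\theta')^\intercal\left(\widecheck T(\theta'')-\widecheck T(\theta')\right)\le 0\,.
\end{displaymath}

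The reverse inequality, however, is a genuine gap, and the route you propose for it cannot work. Lemma~\ref{lem:monotone} says only that $\widehat T_j$ does not decrease when hat mass is moved toward $\widehat\gamma_j$ \emph{with the check distribution frozen}; in the language of Lemma~\ref{lem:diff} it carries information about the diagonal blocks $\widehat Q,\widecheck Q$ only, whereas your road-wise sum involves simultaneous changes of both populations and is governed by the full block form including the cross matrices $\widehat P,\widecheck P$. Positive semidefiniteness of that block form is precisely what Lemma~\ref{lem:defPos} characterizes, and it is \emph{not} implied by~\ref{item:E1}: this is why Theorem~\ref{thm:uniqueness} needs the extra hypothesis~\ref{item:2}. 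Concretely, take $N=2$, $\widehat\Gamma=\widecheck\Gamma=\Id_2$ (two parallel roads from a common origin to a common destination, each road a route for each population) with
\begin{displaymath}
  \widehat\tau_1(\widehat\eta,\widecheck\eta)=1+\widecheck\eta\,,\qquad
  \widehat\tau_2\equiv\tfrac32\,,\qquad
  \widecheck\tau_1\equiv 1\,,\qquad
  \widecheck\tau_2\equiv 1\,,
\end{displaymath}
which satisfies~\ref{item:E1}. Then $\theta'=\left((1,0),(\tfrac12,\tfrac12)\right)$ and $\theta''=\left((0,1),(1,0)\right)$ are both Nash equilibria (at $\theta'$: $\widehat T_1=\widehat T_2=\tfrac32$; at $\theta''$: $\widehat T_1(\theta'')=2\ge\tfrac32=\widehat T_M(\theta'')$, and all check times equal $1$), yet
\begin{displaymath}
  (\widehat\theta''-\widehat\theta')^\intercal\left(\widehat T(\theta'')-\widehat T(\theta')\right)
  =(-1,1)\cdot\left(\tfrac12,0\right)=-\tfrac12\,,
\end{displaymath}
while the check form vanishes, so the total you want to be $\ge 0$ equals $-\tfrac12$. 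Thus the crux step is not merely unjustified by Lemma~\ref{lem:monotone}: the inequality it is meant to deliver is false under~\ref{item:E1}, so no argument can fill this gap. It is worth noting that the paper's own proof suffers from the same issue --- its displayed chain also establishes only ``$\le\cdots=0$'', i.e.\ the upper bound --- and the example above shows that the equalities in~\eqref{eq:1} genuinely fail in this generality; what is provable, and what the uniqueness arguments of \S~\ref{sec:examples} actually use, is the pair of one-sided bounds displayed above.
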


Note that whenever $\widehat n = \widecheck n = 2$, the above
condition~\eqref{eq:1}, complemented with the constraints
$\widehat\theta'_1 + \widehat\theta'_2 = \widehat\theta''_1 +
\widehat\theta''_2$ and
$\widecheck\theta'_1 + \widecheck\theta'_2 = \widecheck\theta''_1 +
\widecheck\theta''_2$ leads to a (possibly non-linear) system of $4$
equations in $4$ variables and, hence, may ensure the uniqueness of
the Nash equilibrium.

\section{Specific Cases}
\label{sec:examples}

Here we highlight the meaning of the analytic structure introduced in
Section~\ref{sec:OK}.

As a first example we present a pathological situation where, in the
case of a single population, the equilibria in
Definition~\ref{def:candiate} and Definition~\ref{def:Nash} are
different. Let $N=2$, $n=2$,
$\Gamma = \left[{{1\;0}\atop{0\;1}}\right]$ and set
$\tau_1 (\eta) = 1+3\eta$, $\tau_2 (\eta) = 3-\eta$. Then, $(1,0)$ and
$(0,1)$ are equilibria but not Nash equilibria; $(1/2,1/2)$ is a Nash
equilibrium but not an $\epsilon$--Nash equilibrium. In particular,
there is no $\epsilon$--Nash equilibrium.

Below, to simplify the presentation, we define only those travel times
that are necessary, i.e., if the road $r_h$ does not belong to
$\widehat{\mathcal{N}}$, then $\widehat\tau_h$ is not defined.

\subsection{Nash Equilibria -- a Simple Case}
\label{subsec:twopopulations}

A well known feature of traffic on networks~\cite{JAMOUS2018296,
  REINOLSMANN2022454, ZANG2022103866} are the \emph{non-local}
consequences of changes in a road's travel time. We see below that the
insurgence of a delay on a road may have effects on routes
geographically distant and, apparently, completely independent.

Assume that two different populations of vehicles move in the
following network, where $N=5$, $\widehat{n} = 2$,
$\widecheck{n} = 2$:
\begin{displaymath}
  \begin{tikzpicture}[baseline]
    \draw[thick] (-2,0) -- (2,0) -- (-2,2) -- cycle; %
    \draw[thick] (-2,0) -- (2,0) -- (2,-2) -- cycle; %
    \draw [-stealth] (-1.8,1.5) -- (-1.8, .5); %
    \draw [-stealth] (1.8,-0.5) -- (1.8,-1.5); %
    \draw [-stealth] (-0.5,-0.2) -- (0.5,-0.2); %
    \draw [-stealth] (-1.3,1.5) -- (0.7,0.5); %
    \draw [-stealth] (-1.3,-0.5) -- (0.7,-1.5); %
    \draw (-2.5,2) node {$\widehat{\mathcal{O}}$}; %
    \draw (2.5,0) node {$\widehat{\mathcal{D}}$}; %
    \draw (2.5,-2) node {$\widecheck{\mathcal{D}}$}; %
    \draw (-2.5,-0.2) node
    {$\widecheck{\mathcal{O}}$}; \draw (-2.2,1) node {$r_1$}; %
    \draw (0,1.25) node {$r_2$}; %
    \draw (-0.5,0.2) node {$r_3$}; %
    \draw (0,-1.5) node {$r_5$}; %
    \draw (0.2,-0.4);% node {$r_5$};
    \draw (2.3,-1.2) node {$r_4$};
  \end{tikzpicture}
  \quad
  \begin{array}{l}
    \widehat{\gamma}_1 = (r_1 , r_3)
    \\
    \widehat{\gamma}_2 = r_2
    \\
    \widecheck{\gamma}_1 =  (r_3 , r_4)
    \\
    \widecheck{\gamma}_2 =  r_5
  \end{array}
  \quad
  \begin{array}{@{}c@{\;}c@{}}
    \begin{array}{l}
      \widehat{\tau}_1 (\eta) = 1 + \widehat{\eta}
      \\
      \widehat{\tau}_2 (\eta)= 3 + \widehat{\eta}
      \\
      \widehat{\tau}_3 (\eta)= 1 + \widehat{\eta} +\widecheck{\eta}
    \end{array}
    & \begin{array}{l}
        \widecheck{\tau}_4 (\eta)= 1 + \widecheck{\eta}
        \\
        \widecheck{\tau}_5 (\eta)= 3 + \widecheck{\eta}
        \\
        \widecheck{\tau}_3 (\eta)= 1 + \widehat{\eta} + \widecheck{\eta}
      \end{array}
    \\[20pt]
    \widehat{\Gamma}
    =
    \left[
    \begin{array}{cc}
      1
      & 0
      \\
      0
      & 1
      \\
      1
      & 0
      \\
      0
      & 0
      \\
      0
      & 0
    \end{array}
        \right]
      & \widecheck{\Gamma}
        =
        \left[
        \begin{array}{cc}
          0
          & 0
          \\
          0
          & 0
          \\
          1
          & 0
          \\
          1
          & 0
          \\
          0
          & 1
        \end{array}
            \right]
  \end{array}
\end{displaymath}
The first population moves from the origin $\widehat{\mathcal{O}}$ to
the destination $\widehat{\mathcal{D}}$; $\widehat\theta_1$ travelers
go along the central route $\widehat\gamma_{1}=(r_{1}, r_{3})$, while
$\widehat\theta_2$ travelers move along the northern highway
$\widehat\gamma_{2} = r_{2}$.  The second population moves from the
origin $\widecheck{\mathcal{O}}$ to the destination
$\widecheck{\mathcal{D}}$ and $\widecheck\theta_1$ travelers move from
the origin $\widecheck{\mathcal{O}}$ to the destination
$\widecheck{\mathcal{D}}$ along the central route
$\widecheck\gamma_{1} = (r_{3}, r_{4})$, while $\widecheck\theta_2$
travelers move along the southern highway
$\widecheck\gamma_{2} = r_{5}$. Road $r_3$ is used by both
populations. We thus have
\begin{displaymath}
  \begin{array}{r@{\,}c@{\,}l@{\qquad\qquad}r@{\,}c@{\,}l}
    \widehat T_{1} (\theta)
    & =
    & 2 +  2\widehat \theta_1 + \widecheck \theta_1 \,;
    & \widecheck T_1 (\theta)
    & =
    & 2 + \widehat \theta_1 + 2\widecheck \theta_1 \,;
    \\
    \widehat T_2(\theta)
    & =
    & 3 + \widehat \theta_2 \,;
    & \widecheck T_2 (\theta)
    & =
    & 3 + \widecheck \theta_2 \,.
  \end{array}
\end{displaymath}
The point $\theta^* \equiv \left((1/2,1/2),(1/2,1/2)\right)$ is a Nash
equilibrium with corresponding travel times
$\widehat{T}_i (\theta^*) = \widehat{T}_j (\theta^*) = 7/2$ for
$i,j \in \{1,2\}$. A straightforward application of
Proposition~\ref{prop:unique0} ensures that $\theta^*$ is the unique
Nash equilibrium.

Assume now that travel time increases on the northern highway
$\widehat\gamma_{2} = r_{2}$ by a fixed delay $\Delta$ due, for
instance, to construction works or to an accident, so that we have
\begin{displaymath}
  \begin{array}{l}
    \widehat{\tau}_1 (\eta) = 1 + \widehat{\eta}
    \\
    \widehat{\tau}_2 (\eta)= 3 + \widehat{\eta} +\Delta
    \\
    \widehat{\tau}_3 (\eta)= 1 + \widehat{\eta} +\widecheck{\eta}
  \end{array}
  \quad
  \begin{array}{l}
    \widecheck{\tau}_4 (\eta)= 1 + \widecheck{\eta}
    \\
    \widecheck{\tau}_5 (\eta)= 3 + \widecheck{\eta}
    \\
    \widecheck{\tau}_3 (\eta)= 1 + \widehat{\eta} + \widecheck{\eta}
  \end{array}
  \quad
  \begin{array}{r@{\,}c@{\,}l@{\qquad}r@{\,}c@{\,}l}
    \widehat T_{1} (\theta)
    & =
    & 2 +  2\widehat \theta_1 + \widecheck \theta_1
    & \widecheck T_1 (\theta)
    & =
    & 2 + \widehat \theta_1 + 2\widecheck \theta_1
    \\
    \widehat T_2(\theta)
    & =
    & 3 + \widehat \theta_2 + \Delta
    & \widecheck T_2 (\theta)
    & =
    & 3 + \widecheck \theta_2\,.
  \end{array}
\end{displaymath}
Straightforward computations show that
$\theta_\Delta \equiv \left(1/2 + 3\Delta/8, 1/2 - 3\Delta/8), (1/2 -
  \Delta/8, 1/2 + \Delta/8)\right)$ is a Nash equilibrium. The same
computations as above, on the basis of Proposition~\ref{prop:unique0},
ensure that $\Theta_\Delta$ is the unique Nash equilibrium. The travel
times at $\theta_\Delta$ are
$\widehat{T}_1 (\theta_\Delta) = \widehat{T}_2 (\theta_\Delta) = 7/2 +
5\Delta/8$ and
$\widecheck{T}_1 (\theta_\Delta) = \widecheck{T}_2 (\theta_\Delta) =
7/2 + \Delta/8$: they are worse than before for all routes, also for
those not directly influenced by the delay.

\subsection{Nash Equilibria with Unbounded Travel Times}
\label{subsec:nash-equilibria-with}

The non-local effects presented in \S~\ref{subsec:twopopulations} are
here more relevant due to the congestion caused on a road by a delay
appearing on an apparently independent road.

Two populations of vehicles move in the following network, where
$N=7$, $\widehat{n} = 2$, $\widecheck{n} = 2$:
\begin{displaymath}
  \begin{array}{c}
    \begin{tikzpicture}[baseline]
      \draw[thick] (-2,1) -- (2,1) -- (1,0) -- (-1,0) -- cycle;%
      \draw[thick] (-2,-1) -- (2,-1) -- (1,0) -- (-1,0) -- cycle;%
      \draw (-2.5,1) node {$\widehat{\mathcal{O}}$}; %
      \draw (2.5,1) node {$\widehat{\mathcal{D}}$}; %
      \draw (-2.5,-1) node {$\widecheck{\mathcal{O}}$}; %
      \draw (2.5,-1) node {$\widecheck{\mathcal{D}}$}; %
      \draw [-stealth] (-0.5,0.8) -- (0.5,0.8); %
      \draw [-stealth] (-0.5,0.2) -- (0.5,0.2); %
      \draw [-stealth] (-0.5,-0.8) -- (0.5,-0.8); %
      \draw [-stealth] (-1.6,-0.8) -- (-1,-0.2); %
      \draw [-stealth] (-1.6,0.8) -- (-1,0.2); %
      \draw [-stealth] (1,0.2) -- (1.6,0.8); %
      \draw [-stealth] (1,-0.2) -- (1.6,-0.8); %
      \draw (0,1.3) node {$r_1$}; %
      \draw (-2,0.5) node {$r_2$}; %
      \draw (-2,-0.5) node {$r_3$}; %
      \draw (0,-1.3) node {$r_4$}; %
      \draw (0,-0.25) node {$r_5$}; %
      \draw (2,-0.5) node {$r_6$}; %
      \draw (2,0.5) node {$r_7$}; %
    \end{tikzpicture}
    \\
    \begin{array}{l}
      \widehat{\tau}_1 (\eta) = 2 + \widehat\eta + \Delta
      \\
      \widehat{\tau}_2 (\eta)= 1
      \\
      \widehat{\tau}_5 (\eta) = \frac{\widehat{\eta} + \widecheck{\eta}}{1- (\widehat{\eta} +\widecheck{\eta})}
      \\
      \widehat{\tau}_7 (\eta)= 1
    \end{array}
    \quad
    \begin{array}{l}
      \widecheck{\tau}_3 (\eta)= 1
      \\
      \widecheck{\tau}_4 (\eta)= 2 + \widecheck\eta
      \\
      \widecheck{\tau}_5 (\eta)= \frac{\widehat{\eta} + \widecheck{\eta}}{1- (\widehat{\eta} +\widecheck{\eta})}
      \\
      \widecheck{\tau}_6 (\eta) = 1
    \end{array}
  \end{array}
  \qquad
  \begin{array}{@{}c@{\qquad}c@{}}
    \begin{array}{@{}l}
      \widehat{\gamma}_1 = (r_2 , r_5, r_7)
      \\
      \widehat{\gamma}_2 = r_1
      \\
      \widecheck{\gamma}_1 =  (r_3 , r_5, r_6)
      \\
      \widecheck{\gamma}_2 =  r_4
    \end{array}
    & \begin{array}{l@{}}
        \widehat T_1 (\theta) = 2 + \frac{\widehat{\theta}_1 + \widecheck{\theta}_1}{1- (\widehat{\theta}_1 +\widecheck{\theta}_1)}
        \\
        \widehat T_2 (\theta) = 2 + \widehat\theta_2 + \Delta
        \\
        \widecheck T_1 (\theta) = 2 + \frac{\widehat{\theta}_1 + \widecheck{\theta}_1}{1- (\widehat{\theta}_1 +\widecheck{\theta}_1)}
        \\
        \widecheck T_2 (\theta) = 2 + \widecheck\theta_2
      \end{array}
    \\[30pt]
    \widehat{\Gamma}
    =
    \left[
    \begin{array}{cc}
      0
      & 1
      \\
      1
      & 0
      \\
      1
      & 0
      \\
      0
      & 0
      \\
      1
      & 0
      \\
      0
      & 0
      \\
      1
      & 0
    \end{array}
        \right]
      & \widecheck{\Gamma}
        =
        \left[
        \begin{array}{cc}
          0
          & 0
          \\
          0
          & 0
          \\
          1
          & 0
          \\
          0
          & 1
          \\
          1
          & 0
          \\
          1
          & 0
          \\
          0
          & 0
        \end{array}
            \right]
  \end{array}
\end{displaymath}

In this network, one population moves from the origin
$\widehat{\mathcal{O}}$ to the destination $\widehat{\mathcal{D}}$;
$\widehat\theta_1$ travelers move along the central route
$\widehat\gamma_{1}=(r_{2}, r_{5}, r_{7})$, while $\widehat\theta_2$
travelers move along the northern highway
$\widehat\gamma_{2} = r_{1}$. In the other population (which moves
from the origin $\widecheck{\mathcal{O}}$ to the destination
$\widecheck{\mathcal{D}}$), $\widecheck\theta_1$ travelers move along
the central route $\widecheck\gamma_{1} = (r_{3}, r_{5}, r_{6})$ and
$\widecheck\theta_2$ travelers move along the southern highway
$\widecheck\gamma_{2} = r_{4}$. Road $r_5$ is used by both
populations.

For $\Delta \in [0, 3/2\mathclose[$, a Nash equilibrium and the
corresponding travel times are:
\begin{equation}
  \label{eq:40}
  \begin{array}{r@{\;}c@{\;}l}
    \widehat\theta_\Delta
    & =
    & \left(
      \dfrac{5+5\Delta-\sqrt{\Delta^2+26\Delta+17}}{4},
      \dfrac{-1-5\Delta+\sqrt{\Delta^2+26\Delta+17}}{4}
      \right)\,,
    \\
    \widecheck\theta_\Delta
    & =
    & \left(
      \dfrac{5+\Delta-\sqrt{\Delta^2+26\Delta+17}}{4},
      \dfrac{-1-\Delta+\sqrt{\Delta^2+26\Delta+17}}{4}
      \right)\,,
    \\
    \widehat T_1 (\theta_\Delta)
    & =
    & \widehat T_2 (\theta_\Delta)
      =
      \dfrac{7-\Delta+\sqrt{\Delta^2+26\Delta+17}}{4}\,.
    \\
    \widehat T_1 (\theta_\Delta)
    & =
    & \widehat T_2 (\theta_\Delta)
      =
      \dfrac{7-\Delta+\sqrt{\Delta^2+26\Delta+17}}{4}\,.
  \end{array}
\end{equation}
Proposition~\ref{prop:unique0} ensures that
$\theta_\Delta \equiv (\widehat\theta_\Delta,
\widecheck\theta_\Delta)$ is the unique Nash equilibrium. For
$\Delta=0$, $\theta_0$ is in the interior of $S^2 \times S^2$ and
satisfies $(\widehat\theta_0)_1 + (\widecheck\theta_0)_1 < 1$, so that
all travel times are finite. As $\Delta$ increases, traveling along
$r_1$ gets less and less convenient, so that
$(\widehat\theta_\Delta)_2$ decreases, $(\widehat\theta_\Delta)_1$
increases and the equilibrium travel time also increases. In turn, for
the check population, traveling along $\widecheck\gamma_1$ is less and
less convenient, so that $\widecheck\theta_1$ decreases. When
$\Delta=1/2$, the Nash equilibrium for the check population is at
$(\widecheck\theta_{1/2})_1=0$, $(\widecheck\theta_{1/2})_2=1$, while
for the hat population it is in the interior of $S^2$, due to the
unboundedness of the travel time along $\widehat\gamma_1$.

This example shows that a delay along a route for the hat population
may radically change the optimal choice for the check population.

\subsection{Braess Paradox for \texorpdfstring{$2$}{2} Populations}
\label{subsec:braess-paradox-2}

In the present multi-population setting, phenomena like the celebrated
Braess paradox~\cite{BraessParadox} are possible and may have effects
on all the populations on the network, even in the case only one
population is affected by the introduction of a new road. The
literature on Braess paradox is extremely rich. We refer here for
instance to the textbook~\cite[\S~8.2]{MR2677125} for a general
introduction; a stochastic study is in~\cite{Bittihn2018133}; queue
theory is employed in~\cite{Lin2009117}; a variational inequality
model is used in~\cite{Nagurney2007355} while a non-stationary
approach is considered in~\cite{MR4104792}. A different
multi-population approach is in~\cite{ijcai2019p80}.

\subsubsection{Trucks and Cars on the Same Network}
\label{subsec:trucks-cars-same}

We consider here two different populations traveling along the same
network.  The hat population consists of, say, trucks while the check
population consists of cars. The former are slow and not influenced by
the latter, which are faster and slowed down by the presence of the
former. Thus, both populations travel along the same roads, but with
different travel times.

Consider the classical network~\eqref{eq:BraessPrima} leading to
Braess paradox~\cite{BraessParadox}, where $N = 4$, $\widehat n = 2$,
$\widecheck n = 2$ and all roads are one way from left to right.
\begin{equation}
  \label{eq:BraessPrima}
  \begin{array}{@{}c@{}c@{}}
    \begin{tikzpicture}[baseline]
      \draw[thick] (-2,1) -- (0,0) -- (2,1) -- (0,2) -- cycle; %
      \draw [-stealth] (0.2,0.3) -- (1.4,0.9); %
      \draw [-stealth] (0.2,1.7) -- (1.4,1.1); %
      \draw [-stealth] (-1.4,1.1) -- (-0.2,1.7); %
      \draw [-stealth] (-1.4,0.9) -- (-0.2,0.3); %
      \draw (-2.8,1) node
      {$\widehat{\mathcal{O}}=\widecheck{\mathcal{O}}$}; %
      \draw (2.8,1) node
      {$\widehat{\mathcal{D}}=\widecheck{\mathcal{D}}$}; %
      \draw (-1,0.2) node {$r_1$};%
      \draw (1,0.2) node {$r_4$};%
      \draw (-1,1.8) node {$r_2$};%
      \draw (1,1.8) node {$r_3$};%
    \end{tikzpicture}
    & %
      \begin{array}[b]{@{}r@{\,}c@{\,}l@{}}
        \widehat\tau_1 (\eta)
        & =
        & 45
        \\
        \widehat\tau_2 (\eta)
        & =
        & 40\, \widehat\eta
        \\
        \widehat\tau_3 (\eta)
        & =
        & 45
        \\
        \widehat\tau_4 (\eta)
        & =
        & 40 \, \widehat\eta
      \end{array}
          \quad
          \begin{array}[b]{@{}r@{\,}c@{\,}l@{}}
            \widecheck\tau_1 (\eta)
            & =
            & 30
            \\
            \widecheck\tau_2 (\eta)
            & =
            & 20 \, \widecheck\eta + 8 \,\widehat\eta
            \\
            \widecheck\tau_3 (\eta)
            & =
            & 30
            \\
            \widecheck\tau_4 (\eta)
            & =
            & 20 \, \widecheck\eta + 8 \,\widehat\eta
          \end{array}
    \\[12pt]
    \begin{array}{r@{\,}c@{\,}l}
      \widehat\gamma_1
      & =
      & (r_2,r_3)
      \\
      \widehat\gamma_2
      & =
      & (r_1,r_4)
    \end{array}
        \quad
        \begin{array}{r@{\,}c@{\,}l}
          \widecheck\gamma_1
          & =
          & (r_2,r_3)
          \\
          \widecheck\gamma_2
          & =
          & (r_1,r_4)
        \end{array}
        & %
          \begin{array}{r@{\,}c@{\,}l}
            \widehat T_1 (\theta)
            & =
            & 45 + 40 \, \widehat\theta_1
            \\
            \widehat T_2 (\theta)
            & =
            & 45 + 40 \, \widehat\theta_2
          \end{array}
              \quad
              \begin{array}{r@{\,}c@{\,}l@{}}
                \widecheck T_1 (\theta)
                & =
                & 30 + 20 \, \widecheck\theta_1 + 8 \, \widehat \theta_1
                \\
                \widecheck T_2 (\theta)
                & =
                & 30 + 20 \, \widecheck\theta_2 + 8 \, \widehat \theta_2 .
              \end{array}
  \end{array}
  \!\!\!\!\!\!\!\!\!  \!\!\!\!\!\!\!\!\!
\end{equation}
Straightforward computations show that a Nash equilibrium and the
corresponding route travel times are
\begin{displaymath}
  \theta^* \equiv
  \left(\left(\frac12, \frac12\right), \left(\frac12,\frac12\right)\right)
  \qquad
  \begin{array}{r@{\;}c@{\;}l}
    \widehat T_1 (\theta^*) = \widehat T_2 (\theta^*)
    & =
    & 65\,;
    \\
    \widecheck T_1 (\theta^*) = \widecheck T_2 (\theta^*)
    & =
    & 44 \,.
  \end{array}
\end{displaymath}
The uniqueness of this Nash equilibrium follows from
Proposition~\ref{prop:unique0}. Then, we add a new road, $r_5$, as
in~\eqref{eq:Braess}, characterized by a negligible travel time. Thus
we have
\begin{equation}
  \label{eq:Braess}
  \begin{array}{@{}c@{}}
    \begin{tikzpicture}[baseline]
      \draw[thick] (-2,1) -- (0,0) -- (0,2) -- cycle; %
      \draw[thick] (2,1) -- (0,0) -- (0,2) -- cycle; %
      \draw [-stealth] (0.2,0.3) -- (1.4,0.9); %
      \draw [-stealth] (0.2,1.7) -- (1.4,1.1); %
      \draw [-stealth] (-1.4,1.1) -- (-0.2,1.7); %
      \draw [-stealth] (-1.4,0.9) -- (-0.2,0.3); %
      \draw [-stealth] (-0.2,1.4) -- (-0.2,0.6); %
      \draw (-2.8,1) node
      {$\widehat{\mathcal{O}}=\widecheck{\mathcal{O}}$}; %
      \draw (2.8,1) node
      {$\widehat{\mathcal{D}}=\widecheck{\mathcal{D}}$}; %
      \draw (-1,0.2) node {$r_1$};%
      \draw (0.3,1) node {$r_5$};%
      \draw (1,0.2) node {$r_4$};%
      \draw (-1,1.8) node {$r_2$};%
      \draw (1,1.8) node {$r_3$};%
    \end{tikzpicture}
    \qquad
    \begin{array}[b]{r@{\,}c@{\,}l}
      \widehat\tau_1 (\eta)
      & =
      & 45
      \\
      \widehat\tau_2 (\eta)
      & =
      & 40\, \widehat\eta
      \\
      \widehat\tau_3 (\eta)
      & =
      & 45
      \\
      \widehat\tau_4 (\eta)
      & =
      & 40 \, \widehat\eta
      \\
      \widehat\tau_5 (\eta)
      & =
      & 0
    \end{array}
        \qquad
        \begin{array}[b]{r@{\,}c@{\,}l}
          \widecheck\tau_1 (\eta)
          & =
          & 30
          \\
          \widecheck\tau_2 (\eta)
          & =
          & 20 \, \widecheck\eta + 8 \,\widehat\eta
          \\
          \widecheck\tau_3 (\eta)
          & =
          & 30
          \\
          \widecheck\tau_4 (\eta)
          & =
          & 20 \, \widecheck\eta + 8 \,\widehat\eta
          \\
          \widecheck\tau_5 (\eta)
          & =
          & 0
        \end{array}
    \\[6pt]
    \begin{array}{r@{\,}c@{\,}l}
      \widehat\gamma_1
      & =
      & (r_2,r_3)
      \\
      \widehat\gamma_2
      & =
      & (r_1,r_4)
      \\
      \widehat\gamma_3
      & =
      & (r_2, r_5, r_4)
      \\[6pt]
      \widecheck\gamma_1
      & =
      & (r_2,r_3)
      \\
      \widecheck\gamma_2
      & =
      & (r_1,r_4)
      \\
      \widecheck\gamma_2
      & =
      & (r_2, r_5, r_4)
    \end{array}
        \qquad\quad
        \begin{array}{@{}r@{\,}c@{\,}l@{}}
          \widehat T_1 (\theta)
          & =
          & 45 + 40 \, (\widehat\theta_1 + \widehat\theta_3)
          \\
          \widehat T_2 (\theta)
          & =
          & 45 + 40 \, (\widehat\theta_2 + \widehat\theta_3)
          \\
          \widehat T_3 (\theta)
          & =
          & 40 (\widehat\theta_1+\widehat\theta_3)
            + 40 (\widehat\theta_2+\widehat\theta_3)
          \\[6pt]
          \widecheck T_1 (\theta)
          & =
          & 30 + 20 \, (\widecheck\theta_1+ \widecheck\theta_3)
            + 8 (\widehat \theta_1 + \widehat\theta_3)
          \\
          \widecheck T_2 (\theta)
          & =
          & 30 + 20 \, (\widecheck\theta_2 + \widecheck\theta_3)
            + 8 (\widehat \theta_2+ \widecheck\theta_3)
          \\
          \widecheck T_3 (\theta)
          & =
          & 20 (\widecheck\theta_1+\widecheck\theta_3)
            + 8 (\widehat\theta_1+\widehat\theta_3)
            + 20 (\widecheck\theta_2+\widecheck\theta_3)
            + 8 (\widehat\theta_2+\widehat\theta_3) \,.
        \end{array}
  \end{array}
\end{equation}
A Nash equilibrium and the corresponding route travel times are
\begin{displaymath}
  \theta^* \equiv
  \left((0, 0, 1), (0, 0, 1)\right)
  \qquad
  \begin{array}{r@{\;}c@{\;}l@{\qquad}r@{\;}c@{\;}l@{\qquad}r@{\;}c@{\;}l}
    \widehat T_1 (\theta^*)
    & =
    & 95
    & \widehat T_2 (\theta^*)
    & =
    & 95
    & \widehat T_3 (\theta^*)
    & =
    & 80
    \\
    \widecheck T_1 (\theta^*)
    & =
    & 58
    & \widecheck T_2 (\theta^*)
    & =
    & 58
    & \widecheck T_3 (\theta^*)
    & =
    & 56 \,.
  \end{array}
\end{displaymath}
The uniqueness of this Nash equilibrium directly follows form
Proposition~\ref{prop:unique0}.

The insertion of the new road $r_5$ for both populations causes an
increase in the travel time at the unique global Nash equilibrium for
all network users.

\subsubsection{Braess Paradox Induced by a Different Population}
\label{subsec:braess-parad-induc}

In this example two different populations, the hat and the check
population, move from different origins to the same destination. Our
aim is to show that the addition of a road, aimed at helping the hat
population, has a negative effect on both the hat and the check
population.

In more details, consider the situation~\eqref{eq:49}, where $N=5$,
$\widehat n = 2$, $\widecheck n = 2$ and all roads are one way
downwards.  In the hat population, $\widehat\theta_1$ travelers move
from the origin $\widehat{\mathcal{O}}$ to the destination
$\widehat{\mathcal{D}}$ along the highway
$\widehat\gamma_{1} = r_{1}$, while $\widehat\theta_2$ move along
$\widehat\gamma_{2}=(r_{2}, r_{5})$. In the check population,
$\widecheck\theta_1$ travelers move from the origin
$\widecheck{\mathcal{O}}$ to the destination $\widecheck{\mathcal{D}}$
along $\widecheck\gamma_{1} = (r_{3}, r_{5})$, while
$\widecheck\theta_2$ move along the highway
$\widecheck\gamma_{2} = r_{4}$. Road $r_5$ is used by both
populations. Thus
\begin{equation}
  \label{eq:49}
  \begin{tikzpicture}[baseline]
    \draw[thick] (-2,2) -- (0,0) -- (0,-2) -- cycle; %
    \draw[thick] (2,2) -- (0,0) -- (0,-2) -- cycle; %
    \draw [-stealth] (-1.2,0.9) -- (-0.2,-0.1); %
    \draw [-stealth] (1.2,0.9) -- (0.2,-0.1); %
    \draw [-stealth] (-0.2,-0.3) -- (-0.2,-1); %
    \draw [-stealth] (-1.2,0) -- (-0.6,-1.2); %
    \draw [-stealth] (1.2,0) -- (0.6,-1.2); %
    \draw (-2.6,2.4) node {$\widehat{\mathcal{O}}$}; %
    \draw (2.6,2.4) node {$\widecheck{\mathcal{O}}$}; %
    \draw (0,-2.2) node
    {$\widehat{\mathcal{D}} = \widecheck{\mathcal{D}}$}; %
    \draw (-1.5,0.2) node {$r_1$};%
    \draw (0.3,-0.5) node {$r_5$};%
    \draw (1.5,0.2) node {$r_4$};%
    \draw (-0.8,1.4) node {$r_2$};%
    \draw (0.8,1.4) node {$r_3$};%
  \end{tikzpicture}
  \quad
  \begin{array}{rcl}
    \widehat\gamma_1
    & =
    & r_1
    \\
    \widehat\gamma_2
    & =
    & (r_2,r_5)
    \\
    \\
    \\
    \\
    \widecheck\gamma_1
    & =
    & (r_3,r_5)
    \\
    \widecheck\gamma_2
    & =
    & r_4
  \end{array}
  \qquad\quad
  \begin{array}{rcl}
    \widehat\tau_1 (\eta)
    & =
    & 4
    \\
    \widehat\tau_2 (\eta)
    & =
    & 1+\widehat\eta
    \\
    \widehat\tau_5 (\eta)
    & =
    & 1 + \widehat\eta + \widecheck\eta
    \\[12pt]
    \widecheck\tau_3 (\eta)
    & =
    & \widecheck\eta
    \\
    \widecheck\tau_4 (\eta)
    & =
    & 5\, \widecheck\eta
    \\
    \widecheck\tau_5 (\eta)
    & =
    & 1 + \widehat\eta + \widecheck\eta \,.
  \end{array}
\end{equation}
The route travel times are
\begin{displaymath}
  \begin{array}{rcl}
    \widehat T_1 (\theta)
    & =
    & 4
    \\
    \widehat T_2 (\theta)
    & =
    & 2 + 2\widehat \theta_2+ \widecheck \theta_1
  \end{array}
  \qquad
  \begin{array}{rcl}
    \widecheck T_1 (\theta)
    & =
    & 1 + \widehat\theta_2 + 2 \widecheck\theta_1
    \\
    \widecheck T_2 (\theta)
    & =
    & 5 \, \widecheck \theta_2
  \end{array}
\end{displaymath}
so that a Nash equilibrium and its corresponding travel times are
\begin{equation}
  \label{eq:19}
  \begin{array}{rcl}
    \widehat\theta^*
    & =
    & (0,1)
    \\
    \widecheck\theta^*
    & =
    & (5/6,1/6)
  \end{array}
  \qquad
  \begin{array}{rcl}
    \widehat T_1 (\theta^*)
    & =
    & 4
    \\
    \widecheck T_1 (\theta^*)
    & =
    & 5/6
  \end{array}
  \qquad
  \begin{array}{rcl}
    \widehat T_2 (\theta^*)
    & =
    & 19/6
    \\
    \widecheck T_2 (\theta^*)
    & =
    & 5/6 \,.
  \end{array}
\end{equation}
Proposition~\ref{prop:unique0} ensures the uniqueness of this
equilibrium.

Introduce now a new road $r_6$ from $\widehat{\mathcal{O}}$ to
$\widecheck{\mathcal{O}}$, the new route $\widehat \gamma_3$ and the
travel time $\widehat\tau_6$, so that now $N=6$, $\widehat n = 3$ and
$\widecheck n = 2$. Modify accordingly the travel time
$\widehat\tau_6$ as follows:
\begin{displaymath}
  \begin{tikzpicture}[baseline]
    \draw[thick] (-2,2) -- (0,0) -- (0,-2) -- cycle; %
    \draw[thick] (-2,2) -- (2,2);%
    \draw[thick] (2,2) -- (0,0) -- (0,-2) -- cycle; %
    \draw [-stealth] (-1.2,0.9) -- (-0.2,-0.1); %
    \draw [-stealth] (1.2,0.9) -- (0.2,-0.1); %
    \draw [-stealth] (-0.2,-0.3) -- (-0.2,-1); %
    \draw [-stealth] (-1.2,0) -- (-0.6,-1.2); %
    \draw [-stealth] (1.2,0) -- (0.6,-1.2) ; %
    \draw [-stealth] (-0.6,1.8) -- (0.6,1.8); %
    \draw (-2.6,2.4) node {$\widehat{\mathcal{O}}$}; %
    \draw (2.6,2.4) node {$\widecheck{\mathcal{O}}$}; %
    \draw (0,-2.2) node
    {$\widehat{\mathcal{D}} = \widecheck{\mathcal{D}}$}; %
    \draw (-1.5,0.2) node {$r_1$};%
    \draw (0.3,-0.4) node {$r_5$};%
    \draw (1.5,0.2) node {$r_4$};%
    \draw (-0.8,1.2) node {$r_2$};%
    \draw (0.8,1.2) node {$r_3$};%
    \draw (0,2.4) node {$r_6$};%
  \end{tikzpicture}
  \begin{array}{rcl}
    \widehat\gamma_1
    & =
    & r_1
    \\
    \widehat\gamma_2
    & =
    & (r_2,r_5)
    \\
    \widehat\gamma_3
    & =
    & (r_6,r_4)
    \\
    \\
    \\
    \\
    \widecheck\gamma_1
    & =
    & (r_3,r_5)
    \\
    \widecheck\gamma_2
    & =
    & r_4
    \\
    \\
  \end{array}
  \qquad
  \begin{array}{rcl}
    \widehat\tau_1
    & =
    & 4
    \\
    \widehat\tau_2
    & =
    & 1+\widehat\eta
    \\
    \widehat\tau_4
    & =
    & 5\, \widehat\eta + 5 \, \widecheck\eta
    \\
    \widehat\tau_5
    & =
    & 1 + \widehat\eta + \widecheck\eta
    \\
    \widehat\tau_6
    & =
    & 1
    \\
    \\
    \widecheck\tau_3
    & =
    & \widecheck\eta
    \\
    \widecheck\tau_4
    & =
    & 5\, \widehat\eta + 5 \, \widecheck\eta
    \\
    \widecheck\tau_5
    & =
    & 1 + \widehat\eta + \widecheck\eta
  \end{array}
\end{displaymath}
so that the route travel times are
\begin{equation}
  \label{eq:46}
  \begin{array}{r@{\,}c@{\,}l@{\qquad}r@{\,}c@{\,}l@{\qquad}r@{\,}c@{\,}l}
    \widehat T_1 (\theta)
    & =
    & 4
    &
      \widehat T_2 (\theta)
    & =
    & 2 + 2\widehat \theta_2+ \widecheck \theta_1
    &
      \widehat T_3 (\theta)
    & =
    & 1 + 5\, \widehat\theta_3 + 5\, \widecheck\theta_2
    \\
    \widecheck T_1 (\theta)
    & =
    & 1 + \widehat\theta_2 + 2 \widecheck\theta_1
    &
      \widecheck T_2 (\theta)
    & =
    & 5 \, \widehat \theta_3 + 5 \, \widecheck \theta_2 \,.
  \end{array}
\end{equation}
A Nash equilibrium and the corresponding travel times are
\begin{equation}
  \label{eq:3}
  \begin{array}{r@{\,}c@{\,}l}
    \widehat\theta^*
    & =
    & (1/15,2/3,4/15)
    \\
    \widecheck\theta^*
    & =
    & (2/3,1/3)
  \end{array}
  \qquad
  \begin{array}{r@{\,}c@{\,}l@{\qquad}r@{\,}c@{\,}l@{\qquad}r@{\,}c@{\,}l}
    \widehat T_1 (\theta^*)
    & =
    & 4
    & \widehat T_2 (\theta^*)
    & =
    & 4
    & \widehat T_3
    & =
    & 4 \,.
    \\
    \widecheck T_1 (\theta^*)
    & =
    & 3
    & \widecheck T_2 (\theta^*)
    & =
    & 3
  \end{array}
\end{equation}
Again, simple computations based on Proposition~\ref{prop:unique0}
ensure the uniqueness of this equilibrium.

The introduction of the new road for the hat population actually
worsens all travel times at the new Nash equilibrium
$\theta^* = \left((1/15,2/3,4/15), \, (2/3,1/3)\right)$. Indeed, these
travel times pass from $\widehat T^* = 19/6$ and
$\widecheck T^* = 5/6$, see~\eqref{eq:19}, to $\widehat T^* = 4$ and
$\widecheck T^* = 3$, see~\eqref{eq:3}.

\section{Proofs and Further Remarks}
\label{sec:technical-details}

The following notation is used throughout.
$\reali_+ = \mathopen[0, +\infty\mathclose[$,
$\reali_- = \mathopen]-\infty, 0\mathclose]$. $(e_1, \ldots, e_n)$ is
the canonical basis in $\reali^n$. $\one_n$ is the column vector in
$\reali^n$ whose components are all $1$: $\one_n = \sum_{i=1}^n
e_i$. The identity matrix of order $n$ is $\Id_n$. Occasionally, we
write $\zero_n$ for the null vector in $\reali^n$. The simplex $S^n$
is defined as
$S^n \coloneq \left\{\theta \in \reali_+^n \colon \sum_{i=1}^n
  \theta_i = 1 \right\}$. The cardinality of the set $I$ is
$\sharp I$.

\goodbreak\medskip

The next result is a simple computation repeatedly used in the sequel.

\begin{lemma}
  \label{lem:diff}
  Let~\ref{item:E1} hold. Then, with the notation~\eqref{eq:11}, for
  all
  $\widehat\theta, \widehat\theta', \widehat\theta'' \in S^{\widehat
    n}$ and
  $\widecheck\theta, \widecheck\theta', \widecheck\theta'' \in
  S^{\widecheck n}$ the following equalities hold, whenever the terms
  in the left hand side are finite:
  \begin{equation}
    \label{eq:26}
    \!\!\!
    \begin{array}{@{}c@{}}
      \widehat{T} (\widehat\theta'',\widecheck\theta) - \widehat{T} (\widehat\theta', \widecheck\theta)
      =
      \widehat\Gamma^\intercal \, \widehat Q \; \widehat\Gamma \, (\widehat\theta'' - \widehat\theta')
      \mbox{ where }
      \widehat Q
      =
      \int_0^1
      D_{\widehat\eta} \widehat\tau
      \left(\widehat\Gamma (s\, \widehat\theta'' + (1-s) \widehat\theta'),
      \widecheck\Gamma\,\widecheck\theta\right) \d{s};
      \\
      \widehat{T} (\widehat\theta,\widecheck\theta'') - \widehat{T} (\widehat\theta, \widecheck\theta')
      =
      \widehat\Gamma^\intercal \, \widehat P \; \widecheck\Gamma \, (\widecheck\theta'' - \widecheck\theta')
      \mbox{ where }
      \widehat P
      =
      \int_0^1
      D_{\widecheck\eta} \widehat\tau
      \left(\widehat\Gamma \widehat\theta,
      \widecheck\Gamma(s\, \widecheck\theta'' + (1-s) \widecheck\theta')\right) \d{s};
      \\
      \widecheck{T} (\widehat\theta,\widecheck\theta'') - \widecheck{T} (\widehat\theta, \widecheck\theta')
      =
      \widecheck\Gamma^\intercal \, \widecheck Q \; \widecheck\Gamma \, (\widecheck\theta'' - \widecheck\theta')
      \mbox{ where }
      \widecheck Q
      =
      \int_0^1
      D_{\widecheck\eta}\widecheck\tau\left(\widecheck\Gamma \, \widehat\theta,\widecheck\Gamma (s\, \widecheck\theta'' + (1-s) \widecheck\theta')\right) \d{s};
      \\
      \widecheck{T} (\widehat\theta'',\widecheck\theta) - \widecheck{T} (\widehat\theta', \widecheck\theta)
      =
      \widecheck\Gamma^\intercal \, \widecheck P \; \widehat\Gamma \, (\widehat\theta'' - \widehat\theta')
      \mbox{ where }
      \widecheck P
      =
      \int_0^1
      D_{\widehat\eta} \widecheck\tau\left(\widehat\Gamma (s\, \widehat\theta'' + (1-s) \widehat\theta'), \widecheck\Gamma\,\widecheck\theta\right) \d{s}.
    \end{array}
  \end{equation}
  Moreover, $\widehat Q, \widehat P, \widecheck Q$ and $\widecheck P$
  are diagonal $N\times N$ matrices with non-negative entries.
\end{lemma}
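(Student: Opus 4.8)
The plan is to establish the first identity in~\eqref{eq:26}; the remaining three follow by the identical argument, merely exchanging the roles of the hat and check populations and of the two arguments of the cost functions. The mechanism is nothing but the Fundamental Theorem of Calculus along the straight segment joining the two relevant distributions, combined with the chain rule. Concretely, I would fix $\widecheck\theta$ and set $\widehat\theta (s) \colonequals (1-s)\,\widehat\theta' + s\,\widehat\theta''$ for $s \in [0,1]$, so that $\widehat\theta (0) = \widehat\theta'$, $\widehat\theta (1) = \widehat\theta''$ and the derivative of $\widehat\theta (s)$ equals $\widehat\theta'' - \widehat\theta'$. Recalling from~\eqref{eq:11} that $\widehat{T} (\widehat\theta (s), \widecheck\theta) = \widehat\Gamma^\intercal\, \widehat\tau (\widehat\Gamma\,\widehat\theta (s), \widecheck\Gamma\,\widecheck\theta)$, and that by~\eqref{eq:10} the $h$-th component of $\widehat\tau$ depends only on the $h$-th components of its arguments, the chain rule gives
\begin{displaymath}
  \frac{\d{}}{\d{s}}\,\widehat{T} (\widehat\theta (s), \widecheck\theta)
  = \widehat\Gamma^\intercal\, D_{\widehat\eta}\widehat\tau \bigl(\widehat\Gamma\,\widehat\theta (s), \widecheck\Gamma\,\widecheck\theta\bigr)\, \widehat\Gamma\, (\widehat\theta'' - \widehat\theta')\,.
\end{displaymath}
Integrating over $s \in [0,1]$ and pulling the constant factors $\widehat\Gamma^\intercal$ and $\widehat\Gamma\, (\widehat\theta'' - \widehat\theta')$ outside the integral yields precisely the first line of~\eqref{eq:26}, with $\widehat{Q}$ as defined there.

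The point needing care --- and the main obstacle --- is the justification of this computation when the costs may attain $+\infty$, since~\ref{item:E1} supplies a derivative only where $\widehat\tau_h$ is finite. Here I would exploit the monotonicity built into the class $\mathcal{C}$. Along the segment, the load on road $r_h$ is $\widehat\Gamma_h\,\widehat\theta (s) = (1-s)\,\widehat\Gamma_h\,\widehat\theta' + s\,\widehat\Gamma_h\,\widehat\theta''$, which varies affinely, hence monotonically, in $s$ between its two endpoint values, while $\widecheck\Gamma_h\,\widecheck\theta$ stays fixed. Since $\widehat\tau_h$ is weakly increasing in its first variable, its value along the segment is squeezed between the two endpoint values $\widehat\tau_h (\widehat\Gamma_h\,\widehat\theta', \widecheck\Gamma_h\,\widecheck\theta)$ and $\widehat\tau_h (\widehat\Gamma_h\,\widehat\theta'', \widecheck\Gamma_h\,\widecheck\theta)$. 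The hypothesis that the left-hand side is finite forces both endpoint values of every relevant $\widehat\tau_h$ to be finite; combined with the continuity of $\widehat\tau_h$ into $\reali_+ \cup \{+\infty\}$ (which makes the finiteness set relatively open), the whole closed segment then lies in the region where $\widehat\tau_h$ is finite. By~\ref{item:E1} the map $s \mapsto \widehat{T} (\widehat\theta (s), \widecheck\theta)$ is therefore of class $\C1$ on $[0,1]$, so the Fundamental Theorem of Calculus applies. Roads $r_h$ lying on no hat route carry a zero column of $\widehat\Gamma$ and contribute nothing, so they may be disregarded.

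It remains to verify the structural claims on $\widehat{Q}$ --- and, verbatim, on $\widehat{P}$, $\widecheck{Q}$, $\widecheck{P}$. Diagonality is immediate from the componentwise structure in~\eqref{eq:10}: since $\partial_{\widehat\eta_k}\widehat\tau_h \equiv 0$ for $k \neq h$, the Jacobian $D_{\widehat\eta}\widehat\tau$ is diagonal, and so is its integral in $s$. Non-negativity of the diagonal entries follows because each $\widehat\tau_h \in \mathcal{C}$ is weakly increasing in each of its variables, so its partial derivatives are $\geq 0$ wherever they exist, a sign preserved under integration. The three remaining identities correspond to the symmetric choices of segment (varying $\widecheck\theta$ in place of $\widehat\theta$) and of differentiation variable, which is exactly why $\widehat{P}$, $\widecheck{Q}$ and $\widecheck{P}$ inherit the same diagonal, non-negative structure.
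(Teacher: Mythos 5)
Your proof is correct and follows essentially the same route as the paper's: the identity is obtained by applying the Fundamental Theorem of Calculus and the chain rule along the segment $s \mapsto (1-s)\,\widehat\theta' + s\,\widehat\theta''$, and the diagonal, non-negative structure of $\widehat Q$, $\widehat P$, $\widecheck Q$, $\widecheck P$ follows from the componentwise form~\eqref{eq:10} and the weak monotonicity in~\ref{item:E1}. Your additional argument that finiteness of the left-hand side propagates to the whole segment (via monotonicity of each $\widehat\tau_h$ and the affine dependence of the road loads on $s$) is a point the paper leaves implicit under ``standard calculus,'' and it is handled correctly.
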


\begin{proof}
  Consider the first $2$ lines in~\eqref{eq:26} Standard calculus
  applied to~\eqref{eq:11} leads to
  \begin{eqnarray*}
    \widehat{T} (\widehat\theta'',\widecheck\theta) - \widehat{T}
    (\widehat\theta',\widecheck\theta)
    & =
    & \widehat\Gamma^\intercal
      \left( \widehat\tau (\widehat\Gamma \widehat\theta'',
      \widecheck\Gamma \,\widecheck\theta)
      - \widehat\tau (\widehat\Gamma \widehat\theta',
      \widecheck\Gamma \,\widecheck\theta)
      \right)
    \\
    & =
    & \widehat \Gamma^\intercal \int_0^1 D_{\widehat\eta}
      \widehat\tau \left(\widehat \Gamma (s\, \widehat\theta'' + (1-s)
      \widehat\theta',\widecheck\Gamma\, \widecheck\theta)\right) \d{s} \; \widehat
      \Gamma \, (\widehat\theta'' - \widehat\theta') \,.
    \\
    \widehat{T} (\widehat\theta,\widecheck\theta'') - \widehat{T} (\widehat\theta, \widecheck\theta')
    & =
    & \widehat\Gamma^\intercal \left(
      \widehat\tau (\widehat\Gamma\, \widehat\theta,\widecheck\Gamma \,\widecheck\theta'')
      -
      \widehat\tau (\widehat\Gamma \, \widehat\theta, \widecheck\Gamma \,\widecheck\theta')
      \right)
    \\
    & =
    & \widehat\Gamma^\intercal
      \int_0^1 D_{\widecheck\eta} \widehat\tau
      \left(\widehat\Gamma \, \widehat\theta,
      \widecheck\Gamma (s \widecheck\theta'' + (1-s)\widecheck\theta')\right)\d{s}
      \; \widecheck\Gamma (\widecheck\theta'' - \widecheck\theta') \,.
  \end{eqnarray*}
  Moreover, \eqref{eq:10} ensures that the two $N \times N $ matrices
  $\widehat Q$ and $\widehat P$ are diagonal and, by~\ref{item:E1},
  \begin{eqnarray*}
    \widehat Q_{hh}
    & =
    & \int_0^1 \partial_{\widehat\eta_h} \widehat\tau_h \left(\widehat \Gamma
      (s\, \widehat\theta'' + (1-s) \widehat\theta',\widehat \Gamma \, \widecheck\theta)\right) \d{s}
      \geq 0
    \\
    \widehat P_{hh}
    & =
    & \int_0^1 \partial_{\widecheck\eta_h} \widehat\tau_h \left(\widehat \Gamma \, \widehat\theta, \widecheck\Gamma
      (s\, \widecheck\theta'' + (1-s) \widecheck\theta')\right) \d{s}
      \geq 0
  \end{eqnarray*}
  proving the first $2$ lines in~\eqref{eq:26}.  The last $2$ lines
  of~\eqref{eq:26} are proved analogously.
\end{proof}

The following observation is of use below.

\begin{remark}
  \label{rem:perm}
  The matrices
  $\widehat \Gamma^\intercal \, \widehat Q \, \widehat \Gamma$,
  $\widehat\Gamma \, \widehat P \, \widecheck \Gamma$,
  $\widecheck \Gamma^\intercal \, \widecheck Q \, \widecheck \Gamma$
  and $\widecheck\Gamma \, \widecheck P \, \widehat \Gamma$
  in~\eqref{eq:26} are invariant with respect to any renumbering of
  roads or routes.
\end{remark}

Indeed, renumbering the roads amounts to a permutation of the rows of
$\widehat \Gamma$ and of the rows and the columns of $\widehat Q$ and
$\widehat{P}$ i.e., we have $\widehat\Gamma \to \Pi\,\widehat\Gamma$
and $\widehat Q \to \Pi \, \widehat Q \, \Pi^{-1}$, for a suitable
$N\times N$ permutation matrix $\Pi$. Then, since $\Pi$ is orthogonal,
$\Gamma^\intercal \widehat Q \, \Gamma \to (\Pi\,\Gamma)^\intercal
(\Pi\, \widehat Q \, \Pi^{-1}) (\Pi\,\Gamma) = \Gamma^\intercal
\widehat Q \, \Gamma$, and similarly for the other matrices.

\begin{lemma}
  \label{lem:gamma}
  Let $\Gamma$ be an $N\times n$ matrix whose entries are $0$ or
  $1$. Let $M$ be an $N \times N$ diagonal matrix with non-negative
  entries. Then, for all $i,j \in \{1, \ldots, n\}$
  \begin{displaymath}
    \left(\Gamma^\intercal \, M \, \Gamma\right) _{jj} \geq \left(\Gamma^\intercal \, M \, \Gamma\right)_{ji}
  \end{displaymath}
\end{lemma}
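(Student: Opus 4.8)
The plan is to prove the inequality entry by entry, reducing it to a sum of manifestly non-negative terms. First I would expand the matrix product. Writing $M=\mathrm{diag}(m_1,\dots,m_N)$ with each $m_h\ge 0$, the diagonality of $M$ collapses the double sum defining a generic entry of $\Gamma^\intercal M\Gamma$ to a single index, so that
\[
  \left(\Gamma^\intercal\,M\,\Gamma\right)_{jk}
  =\sum_{h=1}^N \Gamma_{hj}\,m_h\,\Gamma_{hk}
\]
for all $j,k\in\{1,\dots,n\}$.

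Next I would specialize to $k=j$ and to $k=i$. Since every entry of $\Gamma$ lies in $\{0,1\}$ we have $\Gamma_{hj}^2=\Gamma_{hj}$, whence $\left(\Gamma^\intercal M\Gamma\right)_{jj}=\sum_{h=1}^N\Gamma_{hj}\,m_h$, while $\left(\Gamma^\intercal M\Gamma\right)_{ji}=\sum_{h=1}^N\Gamma_{hj}\,\Gamma_{hi}\,m_h$. Subtracting and factoring gives
\[
  \left(\Gamma^\intercal M\Gamma\right)_{jj}-\left(\Gamma^\intercal M\Gamma\right)_{ji}
  =\sum_{h=1}^N \Gamma_{hj}\,\bigl(1-\Gamma_{hi}\bigr)\,m_h .
\]
Because $\Gamma_{hj}\in\{0,1\}$, because $1-\Gamma_{hi}\in\{0,1\}$ (again as $\Gamma_{hi}\in\{0,1\}$), and because $m_h\ge 0$, every summand is non-negative, so the difference is non-negative and the claim follows.

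There is no genuine obstacle here, as the statement is a direct computation. The only point where the hypotheses are really used is the final step, which simultaneously exploits the $\{0,1\}$-valuedness of $\Gamma$ --- both to rewrite the diagonal term via $\Gamma_{hj}^2=\Gamma_{hj}$ and to ensure $1-\Gamma_{hi}\ge 0$ --- together with the non-negativity of $M$. The combinatorial content is simply that the $j$-th diagonal entry sums the $M$-weight over all rows supported on column $j$, whereas the $(j,i)$ entry sums only over those rows supported on \emph{both} columns $j$ and $i$; the latter index set is contained in the former, which is exactly the source of the inequality.
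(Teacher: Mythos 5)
Your proof is correct and follows essentially the same route as the paper's: expand $\left(\Gamma^\intercal M \Gamma\right)_{jk}$ using the diagonality of $M$, use $\Gamma_{hj}^2=\Gamma_{hj}$ to simplify the diagonal entry, and conclude by non-negativity of the summands. Your explicit factoring of the difference as $\sum_h \Gamma_{hj}\left(1-\Gamma_{hi}\right)m_h$ is merely a more formal rendering of the paper's observation that the off-diagonal sum ranges over a subset of the terms of the diagonal sum.
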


\begin{proof}
  Let $i,j \in \{1, \ldots, n\}$ and $k \in \{1, \ldots, N\}$. Then,
  $\left(\Gamma^\intercal \, M \, \Gamma\right)_{ij} = \sum_{k=1}^N
  \Gamma_{ki} \, \left(M \, \Gamma\right)_{kj} = \sum_{k=1}^N
  \Gamma_{ki} \, M_{kk} \, \Gamma_{kj}$.  Recalling that
  $(\Gamma_{kj})^2 = \Gamma_{kj}$, we have
  \begin{displaymath}
    \left(\Gamma^\intercal \, M \, \Gamma\right)_{jj}
    =
    \sum_{k=1}^N \Gamma_{kj} \, M_{kk}
    \qquad \mbox{ and } \qquad
    \left(\Gamma^\intercal \, M \, \Gamma\right)_{ji}
    =
    \sum_{k=1}^N \Gamma_{ki} \, \Gamma_{kj} \, M_{kk}
  \end{displaymath}
  showing that the latter sum contains either part or all of the terms
  in the former sum. Since all summands are non-negative, the proof
  follows.
\end{proof}

\begin{proofof}{Lemma~\ref{lem:mono}}
  Straightforward computations yield:
  \begin{flalign*}
    \widehat{T}_i (\theta) - \widehat{T}_i (\widehat\theta -
    \epsilon\, e_i + \epsilon \, e_j,\widecheck\theta) & =
    e_i^\intercal \; \widehat\Gamma^\intercal \, \widehat Q \;
    \widehat\Gamma \; (\epsilon\, e_i - \epsilon \, e_j) & [\mbox{By
      Lemma~\ref{lem:diff}}]
    \\
    & = \epsilon \left( e_i^\intercal \; \widehat\Gamma^\intercal \,
      \widehat Q \; \widehat\Gamma \, e_i - e_i^\intercal \;
      \widehat\Gamma^\intercal \, \widehat Q \; \widehat\Gamma \, e_j
    \right)
    \\
    & = \epsilon \left( (\widehat\Gamma^\intercal \, \widehat Q \;
      \widehat\Gamma)_{ii} - (\widehat\Gamma^\intercal \, \widehat Q
      \; \widehat\Gamma)_{ij} \right)
    \\
    & \geq 0 & [\mbox{By Lemma~\ref{lem:gamma}}]
  \end{flalign*}
  proving~\eqref{eq:21}. Inequality~\eqref{eq:27} is proved similarly
  and~\eqref{eq:39} follows by continuity.
\end{proofof}

The next Lemma provides a further general monotonicity property.

\begin{lemma}
  \label{lem:monotone}
  Let~\ref{item:E1} hold. Fix $j \in \{1, \ldots, \widehat n\}$,
  $i \in \{1, \ldots, \widecheck n\}$ and any
  $\theta \in S^{\widehat n}\times S^{\widecheck n}$. Then, both the
  maps
  \begin{displaymath}
    \begin{array}{c@{\,}c@{\,}c@{\,}c@{\, }c}
      \widehat\Upsilon_j
      & \colon
      & [0,1]
      & \to
      & \reali_+
      \\
      &
      & \sigma
      & \mapsto
      & \widehat{T}_j\left(\sigma \, e_j + (1-\sigma) \widehat\theta, \widecheck\theta\right)
    \end{array}
    \qquad
    \begin{array}{c@{\,}c@{\,}c@{\,}c@{\, }c}
      \widecheck\Upsilon_i
      & \colon
      & [0,1]
      & \to
      & \reali_+
      \\
      &
      & \sigma
      & \mapsto
      & \widecheck{T}_i\left(\widehat\theta, \sigma \, e_i + (1-\sigma) \widecheck\theta\right)
    \end{array}
  \end{displaymath}
  are weakly increasing (i.e., non-decreasing).
\end{lemma}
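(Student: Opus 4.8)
The plan is to prove monotonicity \emph{road by road}, an approach with the decisive advantage of being insensitive to whether the travel times are finite or $+\infty$. By the symmetry asserted in the statement it suffices to treat $\widehat\Upsilon_j$, the argument for $\widecheck\Upsilon_i$ being identical after interchanging the roles of the two populations. Recalling~\eqref{eq:11}, I would write
\[
  \widehat\Upsilon_j(\sigma)
  =
  \widehat T_j\!\left(\sigma\, e_j + (1-\sigma)\,\widehat\theta,\ \widecheck\theta\right)
  =
  \sum_{h=1}^N \widehat\Gamma_{hj}\;
  \widehat\tau_h\!\left(
    \widehat\Gamma_h\bigl(\sigma\, e_j + (1-\sigma)\,\widehat\theta\bigr),\
    \widecheck\Gamma_h\,\widecheck\theta
  \right),
\]
so that the dependence on $\sigma$ enters only through the hat--loads on the roads of $\widehat\gamma_j$, while the check component $\widecheck\Gamma_h\,\widecheck\theta$ remains frozen.

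First I would fix a road $r_h$ actually belonging to $\widehat\gamma_j$, i.e.\ one with $\widehat\Gamma_{hj}=1$, since these are the only indices contributing to the sum. For such an $h$ the hat--load equals
\[
  \widehat\Gamma_h\bigl(\sigma\, e_j + (1-\sigma)\,\widehat\theta\bigr)
  =
  \sigma + (1-\sigma)\,\widehat\Gamma_h\,\widehat\theta
  =
  \widehat\Gamma_h\,\widehat\theta + \sigma\,\bigl(1 - \widehat\Gamma_h\,\widehat\theta\bigr).
\]
Since the entries of $\widehat\Gamma$ lie in $\{0,1\}$ and $\widehat\theta\in S^{\widehat n}$, one has $\widehat\Gamma_h\,\widehat\theta = \sum_{i}\widehat\Gamma_{hi}\,\widehat\theta_i \le \sum_i\widehat\theta_i = 1$, so the coefficient $1-\widehat\Gamma_h\,\widehat\theta$ is non-negative and the hat--load is non-decreasing in $\sigma$. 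As $\widehat\tau_h\in\mathcal C$ (by~\ref{item:E1}) is weakly increasing in both variables and its second argument is constant in $\sigma$, each summand is a non-decreasing function of $\sigma$ with values in $\reali_+\cup\{+\infty\}$. A finite sum of non-decreasing functions being non-decreasing, $\widehat\Upsilon_j$ is weakly increasing, as claimed.

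The only genuine delicacy, and the point I expect to be the main obstacle, is the possible unboundedness of the costs: it is precisely this that the road-by-road formulation finesses, since weak monotonicity of $\widehat\tau_h$ holds verbatim in $\reali_+\cup\{+\infty\}$ and no integral representation is invoked. Should one instead wish to argue through the machinery already developed, on any subinterval of $[0,1]$ where $\widehat\Upsilon_j$ stays finite the same conclusion follows from Lemma~\ref{lem:diff} and Lemma~\ref{lem:gamma}: for $\sigma_1<\sigma_2$, setting $\widehat\theta'=\sigma_1 e_j+(1-\sigma_1)\widehat\theta$, $\widehat\theta''=\sigma_2 e_j+(1-\sigma_2)\widehat\theta$ (both in $S^{\widehat n}$) and $A=\widehat\Gamma^\intercal\widehat Q\,\widehat\Gamma$, one obtains
\[
  \widehat\Upsilon_j(\sigma_2)-\widehat\Upsilon_j(\sigma_1)
  =
  (\sigma_2-\sigma_1)\, e_j^\intercal\, A\,(e_j-\widehat\theta)
  =
  (\sigma_2-\sigma_1)\Bigl(A_{jj}-\sum_{i} A_{ji}\,\widehat\theta_i\Bigr),
\]
and Lemma~\ref{lem:gamma} yields $A_{jj}\ge A_{ji}$ for every $i$, whence $\sum_{i} A_{ji}\,\widehat\theta_i\le A_{jj}$ and the increment is non-negative. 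This second route merely re-derives, in the finite case, what the direct monotonicity already gives in full generality.
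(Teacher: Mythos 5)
Your proof is correct, and your primary road-by-road argument is genuinely different from the paper's. The paper proves the lemma exactly along the lines of your second, ``machinery'' route: it sets $\widehat\theta'=\sigma'\,e_j+(1-\sigma')\,\widehat\theta$ and $\widehat\theta''=\sigma''\,e_j+(1-\sigma'')\,\widehat\theta$, applies Lemma~\ref{lem:diff} to write the increment as $(\sigma''-\sigma')\,e_j^\intercal\,\widehat\Gamma^\intercal\widehat Q\,\widehat\Gamma\,(e_j-\widehat\theta)=(\sigma''-\sigma')\sum_{i}\widehat\theta_i\bigl((\widehat\Gamma^\intercal\widehat Q\,\widehat\Gamma)_{jj}-(\widehat\Gamma^\intercal\widehat Q\,\widehat\Gamma)_{ji}\bigr)$, and concludes by Lemma~\ref{lem:gamma}. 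Your direct argument buys two things that route does not: it uses only the monotonicity encoded in $\mathcal{C}$, never the differentiability required by~\ref{item:E1}, and it is insensitive to the value $+\infty$ --- a real point, since Lemma~\ref{lem:diff} is stated only ``whenever the terms in the left hand side are finite'', so the paper's proof, read literally, tacitly assumes finiteness of the travel times along the segment, whereas yours does not. What the paper's route buys in exchange is uniformity: the same $\widehat\Gamma^\intercal\widehat Q\,\widehat\Gamma$ computation recurs in Lemma~\ref{lem:mono}, Theorem~\ref{thm:convex} and Theorem~\ref{thm:uniqueness}, so there the lemma becomes a one-line instance of already established machinery. Your computations check out: for a road with $\widehat\Gamma_{hj}=1$ the load is $\widehat\Gamma_h\widehat\theta+\sigma\,(1-\widehat\Gamma_h\widehat\theta)$ with $\widehat\Gamma_h\widehat\theta\le 1$, hence non-decreasing in $\sigma$, and with the check-load frozen each summand inherits monotonicity from $\widehat\tau_h\in\mathcal{C}$, finite or not.
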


\begin{proof}
  Fix $\sigma',\sigma''\in [0,1]$ with $\sigma' < \sigma''$. Write
  $\widehat\theta' = \sigma' \, e_j + (1-\sigma') \widehat\theta$ and
  $\widehat\theta'' = \sigma'' \, e_j + (1-\sigma'') \widehat\theta$.
  Use~\eqref{eq:11} to compute:
  \begin{flalign*}
    \widehat\Upsilon_j (\sigma'') - \widehat\Upsilon_j (\sigma') & =
    \widehat{T}_j (\widehat\theta'', \widecheck\theta) - \widehat{T}_j
    (\widehat\theta', \widecheck\theta) & [\mbox{Definition of
      $\widehat\Upsilon_j$}]
    \\
    & = e_j^\intercal \left(\widehat{T}
      (\widehat\theta'',\widecheck\theta) - \widehat T
      (\widehat\theta',\widecheck\theta)\right)
    \\
    & = {e_j}^\intercal \; \widehat\Gamma^\intercal \; \widehat Q \;
    \widehat\Gamma \; (\widehat\theta''-\widehat\theta') &
    [\mbox{Lemma~\ref{lem:diff}}]
    \\
    & = {e_j}^\intercal \; \widehat\Gamma^\intercal \; \widehat Q \;
    \widehat\Gamma \; (\sigma''-\sigma') (e_j-\widehat\theta) &
    [\mbox{Definition of $\widehat\theta',\widehat\theta''$}]
    \\
    & = (\sigma''-\sigma') \; {e_j}^\intercal \;
    \widehat\Gamma^\intercal \; \widehat Q \; \widehat\Gamma \;
    (e_j-\widehat\theta)
    \\
    & = (\sigma''-\sigma') \; \sum_{i=1}^n \widehat\theta_i \left(
      (\widehat\Gamma^\intercal \; \widehat Q \; \widehat\Gamma)_{jj}
      - (\widehat\Gamma^\intercal \; \widehat Q \;
      \widehat\Gamma)_{ji} \right)
    \\
    & \geq 0 \,. & [\mbox{by Lemma~\ref{lem:gamma}}]
  \end{flalign*}
  The case of $\widecheck\Upsilon_i$ is entirely similar and the proof
  is completed.
\end{proof}

\begin{proofof}{Theorem~\ref{thm:convex}}
  The implication from $\epsilon$--Nash equilibrium to Nash
  equilibrium under the continuity of the travel times follows by a
  limiting procedure.

  Let $\theta^*$ be a Nash equilibrium. Define
  $\bar\epsilon = \frac{1}{2} \min \left\{\widehat\theta^*_i \colon
    \widehat\theta^*_i > 0\right\}$.  Following
  Definition~\ref{def:Eq}, fix $i,j \in \{1, \ldots, n\}$ so that
  $\widehat\theta^* - \epsilon \, e_i + \epsilon \, e_j \in S^n$ for
  $\epsilon \in [0, \bar\epsilon]$. We thus have
  $\widehat\theta^*_i >0$ so that
  $\widehat{T}_i (\theta^*) = \widehat{T}_j (\theta^*)$ if
  $\widehat\theta^*_j > 0$, or
  $\widehat{T}_i (\theta^*) \leq \widehat{T}_j (\theta^*)$ by
  Definition~\ref{def:candiate} if $\widehat\theta^*_j = 0$. In both
  cases, using also~\eqref{eq:11},
  \begin{eqnarray}
    \nonumber
    \widehat{T}_j (\widehat\theta^* - \epsilon \, e_i + \epsilon\, e_j, \widecheck\theta^*)
    -
    \widehat{T}_i (\theta^*)
    & \geq
    & \widehat{T}_j (\widehat\theta^* - \epsilon \, e_i + \epsilon\, e_j,\widecheck\theta^*)
      - \widehat{T}_j (\widehat\theta^*,\widecheck\theta^*)
    \\
    \label{eq:37}
    & =
    & {e_j}^\intercal
      \left(
      \widehat{T} (\widehat\theta^* - \epsilon \, e_i + \epsilon\, e_j, \widecheck\theta^*)
      -
      \widehat{T} (\theta^*)
      \right) \,.
  \end{eqnarray}
  Under Assumption~\ref{item:E1}, define
  $\widehat Q = \int_0^1
  D_{\widehat\eta}\widehat\tau\left(\widehat\Gamma (s\, \widehat
    \theta^* + (1-s)
    \widehat\theta^*),\widecheck\Gamma\,\widecheck\theta^*\right)
  \d{s}$ and use Lemma~\ref{lem:diff}:
  \begin{eqnarray}
    \nonumber
    \widehat{T}_j (\widehat\theta^* - \epsilon \, e_i + \epsilon\, e_j, \widecheck\theta^*)
    -
    \widehat{T}_i (\widehat\theta^*,\widecheck\theta^*)
    & \geq
    & {e_j}^\intercal
      \left(
      \widehat{T} (\widehat\theta^* - \epsilon \, e_i + \epsilon\, e_j, \widecheck\theta^*)
      -
      \widehat{T} (\theta^*)
      \right)
    \\
    \label{eq:41}
    & =
    & {e_j}^\intercal \, \widehat\Gamma^\intercal \, \widehat Q \, \widehat\Gamma (-\epsilon \, e_i + \epsilon \, e_j) \,.
  \end{eqnarray}

  On the other hand, under Assumption~\ref{item:E2}, we have
  \begin{equation}
    \label{eq:13}
    \forall h \in \{1, \ldots, N\} \;\,
    \forall \eta_o \in [0,1] \;\,
    \exists\, \ell^h_{\eta_o} \in \reali_+ \colon
    \forall \eta \in [0,1] \;\,
    \widehat\tau_h (\eta,\widecheck\theta^*)
    \geq
    \widehat\tau_h (\eta_o,\widecheck\theta^*) + \ell^h_{\eta_o} (\eta - \eta_o) \,.
  \end{equation}
  In the terminology of convex analysis, $\ell^h_{\eta_o}$ is a
  \emph{subgradient} of
  $\eta \mapsto \widehat\tau_h (\eta,\widecheck\theta^*)$ at $\eta_o$.
  Introduce the $N\times N$ diagonal matrix $L$ with entries
  $L_{hh} = \ell^h_{\widehat\theta^*_j}$, for $h=1, \ldots,N$, as
  defined in~\eqref{eq:13}, so that $L_{hh} \geq 0$. All entries in
  $e_j$, $\Gamma^\intercal$ and $\Gamma$ are non-negative, actually
  either $0$ or $1$. Continue from~\eqref{eq:37} using~\eqref{eq:11}:
  \begin{eqnarray}
    \nonumber
    \widehat{T}_j (\widehat\theta^* - \epsilon \, e_i + \epsilon\, e_j, \widecheck\theta^*)
    -
    \widehat{T}_i (\widehat\theta^*,\widecheck\theta^*)
    & \geq
    & {e_j}^\intercal
      \left(
      \widehat{T} (\widehat\theta^* - \epsilon \, e_i + \epsilon\, e_j, \widecheck\theta^*)
      -
      \widehat{T} (\theta^*)
      \right)
    \\
    \nonumber
    & \geq
    & {e_j}^\intercal \, \widehat\Gamma^\intercal
      \left(
      \widehat\tau (\widehat\Gamma \widehat\theta^*, \widecheck\Gamma \widecheck\theta^*)
      +
      L\, \Gamma\left(- \epsilon \, e_i + \epsilon\, e_j\right)
      -
      \widehat\tau (\widehat\Gamma \widehat\theta^*, \widecheck\Gamma \widecheck\theta^*)
      \right)
    \\
    \label{eq:42}
    & =
    & {e_j}^\intercal \, \widehat\Gamma^\intercal \, L\, \widehat\Gamma\left(- \epsilon \, e_i + \epsilon\, e_j\right) \,.
  \end{eqnarray}
  Both, \eqref{eq:41} and~\eqref{eq:42} yield, by
  Lemma~\ref{lem:gamma}:
  \begin{eqnarray*}
    \widehat{T}_j (\widehat\theta^* - \epsilon \, e_i + \epsilon\, e_j, \widecheck\theta^*)
    -
    \widehat{T}_i (\widehat\theta^*,\widecheck\theta^*)
    & \geq
    & {e_j}^\intercal \, \widehat\Gamma^\intercal \, L\, \widehat\Gamma\left(- \epsilon \, e_i + \epsilon\, e_j\right)
    \\
    & =
    & \epsilon \left(
      {e_j}^\intercal \, \widehat\Gamma^\intercal \, L\, \widehat\Gamma \, e_j
      -
      {e_j}^\intercal \, \widehat\Gamma^\intercal \, L\, \widehat\Gamma \, e_i
      \right)
    \\
    & =
    & \epsilon \left(
      \left(\widehat\Gamma^\intercal \, L \, \widehat\Gamma\right)_{jj}
      -
      \left(\widehat\Gamma^\intercal \, L \, \widehat\Gamma\right)_{ji}
      \right)
    \\
    & \geq
    & 0 \,.
  \end{eqnarray*}
  An entirely similar argument applies to $\widecheck{T}$. Thus,
  $\theta^*$ is an $\epsilon$--Nash equilibrium and the proof is
  completed.
\end{proofof}

\begin{proofof}{Proposition~\ref{prop:GlobNash}}
  Consider~\ref{it:N1}. Let $i,j \in \{1, \ldots, \widehat n\}$ be
  such that $\widehat\theta^*_i >0$ and $\widehat\theta^*_j>0$. Then,
  define
  $\widehat\theta^\epsilon = \widehat\theta^* - \epsilon \, e_i +
  \epsilon \, e_j$ and compute:
  \begin{flalign*}
    0 & \geq \widehat{T}_M (\theta^*) - \widehat{T}_M
    (\widehat\theta^\epsilon,\widecheck\theta^*) %
    & [\mbox{By~\eqref{eq:29}}]
    \\
    & = \sum_{\ell=1}^n \left(%
      \widehat\theta^*_\ell \, \widehat{T}_\ell (\theta^*) -
      \widehat\theta^\epsilon_\ell \, \widehat{T}_\ell
      (\widehat\theta^\epsilon,\widecheck\theta^*) \right) &
    [\mbox{By~\eqref{eq:6}}]
    \\
    & = \widehat\theta^*_i \, \widehat{T}_i (\theta^*) -
    \widehat\theta^\epsilon_i \, \widehat
    T_i(\widehat\theta^\epsilon,\widecheck\theta^*) & [\widehat{T}_i
    (\widehat \theta^\epsilon,\widecheck\theta^*) \leq \widehat{T}_i
    (\theta^*) \mbox{ by~\eqref{eq:21}}]
    \\
    & \qquad + \widehat\theta^*_j \, \widehat{T}_j (\theta^*) -
    \widehat\theta^\epsilon_j \, \widehat{T}_j
    (\widehat\theta^\epsilon,\widecheck\theta^*) & [\widehat{T}_j
    (\theta^*) = \widehat{T}_i (\theta^*) \mbox{, $\theta^*$
      equilibrium}]
    \\
    & \qquad + \sum_{\ell\neq i,j} \widehat\theta^*_\ell \left(
      \widehat T_\ell (\theta^*) - \widehat{T}_\ell
      (\widehat\theta^\epsilon,\widecheck\theta^*) \right) &
    [\widehat{T}_\ell (\widehat\theta^*,\widecheck\theta^*) = \widehat
    T_\ell(\widehat\theta^\epsilon,\widecheck\theta^*) \mbox{ by
      Def.~\ref{def:Eq}}]
    \\
    & \geq (\widehat\theta^*_i - \widehat\theta^\epsilon_i +
    \widehat\theta_j^*) \, \widehat{T}_i (\theta^*) -
    \widehat\theta^\epsilon_j \, \widehat{T}_j
    (\widehat\theta^\epsilon,\widecheck\theta^*) & [\widehat\theta^*_i
    - \widehat\theta^\epsilon_i + \widehat\theta_j^*=
    \widehat\theta^\epsilon_j]
    \\
    & = \widehat\theta^\epsilon_j \left(\widehat T_i (\theta^*) -
      \widehat T_j (\widehat \theta^\epsilon, \widecheck
      \theta^*)\right)
  \end{flalign*}
  proving that
  $\widehat T_j (\widehat \theta^* - \epsilon \, e_i + \epsilon \,
  e_j, \widecheck \theta^*) \geq \widehat T_i (\theta^*)$, so that
  $\theta^*$ is an $\epsilon$--Nash equilibrium by
  Definition~\ref{def:Nash}.

  If on the other hand $\widehat\theta^*_j = 0$, then the same
  computations as above lead to
  \begin{flalign*}
    0 & \geq \widehat\theta^*_i \, \widehat{T}_i (\theta^*) -
    \widehat\theta^\epsilon_i \, \widehat{T}_i
    (\widehat\theta^\epsilon,\widecheck\theta^*) -
    \widehat\theta^\epsilon_j \, \widehat{T}_j
    (\widehat\theta^\epsilon, \widecheck\theta^*) &
    [\widehat\theta^\epsilon_j = \epsilon]
    \\
    & = (\widehat\theta^\epsilon_i + \epsilon) \, \widehat{T}_i
    (\widehat\theta^*,\widecheck\theta^*) - \widehat\theta^\epsilon_i
    \, \widehat{T}_i (\widehat\theta^\epsilon,\widecheck\theta^*) -
    \epsilon \, \widehat{T}_j (\widehat\theta^\epsilon,
    \widecheck\theta^*)
    \\
    & = \widehat\theta^\epsilon_i \left(\widehat{T}_i (\theta^*) -
      \widehat{T}_i
      (\widehat\theta^\epsilon,\widecheck\theta^*)\right) + \epsilon
    \left(\widehat{T}_i (\theta^*) - \widehat{T}_j
      (\widehat\theta^\epsilon,\widecheck\theta^*)\right) & [\mbox{by
      Lemma~\ref{lem:mono}}]
    \\
    & \geq \epsilon \left(\widehat{T}_i (\theta^*) - \widehat{T}_j
      (\widehat\theta^\epsilon,\widecheck\theta^*)\right)
  \end{flalign*}
  which also implies
  $\widehat T_j (\widehat \theta^* - \epsilon \, e_i + \epsilon \,
  e_j, \widecheck\theta^*) \geq \widehat T_i (\theta^*)$. The proof
  of~\ref{it:N1} is completed, once the same computations are repeated
  for $\widecheck T$.

  Item~\ref{it:N2} follows, since extreme points are equilibrium
  points.
\end{proofof}

\begin{proofof}{Theorem~\ref{thm:existence}}
  Introduce the functions
  \begin{equation}
    \label{eq:40new}
    \begin{array}{ccccc}
      \bar\phi
      & \colon
      & \reali_+ \cup \{+\infty\}
      & \to
      & [0,1]
      \\
      &
      & \xi
      & \mapsto
      & \left\{
        \begin{array}{ll}
          \xi/(1-\xi)
          & \xi \in \reali_+ \,,
          \\
          1
          & \xi = +\infty \,.
        \end{array}
            \right.
      \\
      \phi
      & \colon
      & \left(\reali_+ \cup \{+\infty\}\right)^n
      & \to
      & [0,1]^n
      \\
      &
      & (\xi_1, \ldots, \xi_n)
      & \mapsto
      &\left(\bar\phi (\xi_1), \ldots, \bar\phi (\xi_n)\right)
    \end{array}
  \end{equation}
  Call $C^n_+$ the set of those $\theta \in \reali^n$ such that
  $\theta_i >0$ for at least one index $i \in \{1,\ldots,n\}$.  Define
  for all $\theta \in C_+^n$ the normalization
  $\mathcal{N} (\theta) \in S^n$ by
  $\mathcal{N} (\theta)_i = \dfrac{\max\{0,
    \theta_i\}}{\sum_{j=1}^n\max\{0, \theta_j\}}$, for
  $i \in \{1, \ldots, n\}$.

  With a slight abuse of notation, we use the same letters $\phi$ or
  $\bar\phi$ independently of whether they are defined on
  $\reali^{\widehat n} \cup \{+\infty\}$ or
  $\reali^{\widecheck n} \cup \{+\infty\}$. The same simplification
  applies to $\mathcal{N}$.

  Recalling~\eqref{eq:11}--\eqref{eq:6} and Definition~\ref{def:Eq},
  define the map
  $\mathcal{F} \colon S^{\widehat n} \times S^{\widecheck n} \to
  S^{\widehat n} \times S^{\widecheck n}$ through its components
  $\widehat{\mathcal{F}}$ and $\widecheck{\mathcal{F}}$:
  \begin{equation}
    \label{eq:15}
    \begin{array}{@{}c@{\;}c@{\;}c@{\;}c@{\;}c@{}}
      \widehat{\mathcal{F}}
      &\colon
      & S^{\widehat n} \times S^{\widecheck n}
      & \to
      & S^{\widehat n}
      \\
      &
      & \theta
      & \mapsto
      & \mathcal{N} \left(
        \widehat\theta
        -
        \lambda
        \left(
        (\phi\circ\widehat{T}) (\theta)
        - \widehat\theta^\intercal (\phi\circ{\widehat{T}}) (\theta)
        \, \one_{\widehat n}
        \right)
        \right)
      \\
      \widecheck{\mathcal{F}}
      &\colon
      & S^{\widehat n} \times S^{\widecheck n}
      & \to
      & S^{\widecheck n}
      \\
      &
      & \theta
      & \mapsto
      & \mathcal{N} \left(
        \widecheck\theta
        -
        \lambda
        \left(
        (\phi\circ\widecheck{T}) (\theta)
        - \widecheck\theta^\intercal (\phi\circ{\widecheck{T}}) (\theta)
        \, \one_{\widecheck n}
        \right)
        \right)
    \end{array}
  \end{equation}
  where
  \begin{equation}
    \label{eq:2}
    \lambda
    =
    \frac12
    \min\left\{\frac1{\widehat n}, \frac1{\widecheck n}\right\} \,.
  \end{equation}

  \paragraph{Claim~1:
    $\mathcal{F} \in \C0 (S^{\widehat n} \times S^{\widecheck
      n};S^{\widehat n} \times S^{\widecheck n})$.}

  Given the definition of $\mathcal{F}$ and $\mathcal{N}$, it is
  sufficient to verify that the denominator in the expression of each
  component of $\mathcal{N}$ does not vanish.  Proceed by
  contradiction:
  \begin{eqnarray*}
    &
    &
      \sum_{j=1}^{\widehat n} \max \left\{
      0,
      \widehat\theta_j
      - \lambda
      \left(
      (\bar\phi \circ \widehat{T}_j) (\theta) -
      \theta^\intercal
      (\phi \circ {\widehat{T}})(\theta)
      \right)
      \right\}
      =
      0
    \\
    &  \iff
    & \forall \, j = 1, \ldots,\widehat{n}
      \qquad
      \displaystyle
      \widehat\theta_j
      - \lambda
      \left(
      (\bar \phi \circ \widehat{T}_j) (\theta) -
      \theta^\intercal
      (\phi \circ {\widehat{T}})(\theta)
      \right)
      \leq
      0
    \\
    & \implies
    & \sum_{j=1}^{\widehat n}
      \left[
      \widehat\theta_j
      - \lambda
      \left(
      (\bar\phi \circ \widehat{T}_j) (\theta) -
      \theta^\intercal
      (\phi \circ {\widehat{T}})(\theta)
      \right)
      \right]
      \leq
      0
    \\
    & \iff
    & \lambda
      \sum_{j=1}^{\widehat n}
      \left(
      (\bar\phi \circ \widehat{T}_j) (\theta) -
      \theta^\intercal
      (\phi \circ {\widehat{T}})(\theta)
      \right)
      \geq
      1
    \\
    & \implies
    & \lambda
      \sum_{j=1}^{\widehat n}
      (\bar\phi \circ \widehat{T}_j) (\theta)
      \geq
      1
  \end{eqnarray*}
  which contradicts the choice~\eqref{eq:2} of $\lambda$, since
  $\sum_{j=1}^{\widehat n}(\bar\phi \circ \widehat{T}_j) (\theta) \leq
  \widehat n$. An entirely similar argument applies to the second
  component $\widecheck{\mathcal{F}}$ of $\mathcal{F}$, proving
  Claim~1.\claimend

  \goodbreak \medskip

  By Brouwer Fixed Point Theorem~\cite[Theorem~1.6.2]{MR0488102},
  $\mathcal{F}$ admits a fixed point, say $\theta^*$.

  \paragraph{Claim~2:
    $\displaystyle \widehat\theta^*_i = 0 \implies (\bar\phi \circ
    \widehat{T}_i) (\theta^*) \geq (\widehat \theta^*)^\intercal\,
    (\phi \circ {\widehat T})(\theta^*)$ and
    $\displaystyle \widecheck\theta^*_i = 0 \implies (\bar\phi \circ
    \widecheck{T}_i) (\theta^*) \geq (\widecheck \theta^*)^\intercal\,
    (\phi \circ {\widecheck T})(\theta^*)$.}

  Direct computations yield:
  \begin{eqnarray*}
    \widehat\theta^* = \widehat{\mathcal{F}} (\theta^*)
    \quad \mbox{ and } \quad
    \widehat\theta^*_i = 0
    & \implies
    & \max \left\{
      0,
      0 - \lambda
      \left(
      (\bar\phi\circ\widehat{T}_i) (\theta^*) -
      \widehat\theta^\intercal (\phi \circ {\widehat{T}})(\theta^*)
      \right)
      \right\}
      =
      0
    \\
    & \iff
    & (\bar\phi \circ \widehat{T}_i)(\theta^*)
      \geq (\theta^*)^\intercal \,
      {\widehat{T}}(\theta^*) \,.
  \end{eqnarray*}
  The second implication is identical. Claim~2 is proved.\claimend

  \paragraph{Claim~3:
    $\displaystyle \widehat\theta^*_i > 0 \implies \widehat\theta^*_i
    > \lambda \left( (\bar\phi \circ \widehat{T}_i) (\theta^*) -
      (\widehat\theta^*)^\intercal \, (\phi \circ {\widehat{T}})
      (\theta^*)\right)$ and
    $\displaystyle \widecheck\theta^*_i > 0 \implies
    \widecheck\theta^*_i > \lambda \left( (\bar\phi \circ
      \widecheck{T}_i) (\theta^*) - (\widecheck\theta^*)^\intercal \,
      (\phi \circ {\widecheck T}) (\theta^*)\right)$.}

  Similar computations give:
  \begin{eqnarray*}
    \widehat\theta^* = \widehat{\mathcal{F}} (\theta^*)
    \quad \mbox{ and } \quad
    \widehat\theta^*_i > 0
    & \implies
    & \max \left\{
      0,
      \widehat\theta^*_i - \lambda
      \left(
      (\bar\phi \circ \widehat{T}_i) (\theta^*) -
      (\widehat\theta^*)^\intercal \, (\phi \circ {\widehat{T}})
      (\theta^*)
      \right)
      \right\}
      >
      0
    \\
    & \iff
    & \widehat\theta^*_i
      >
      \lambda
      \left(
      (\bar\phi \circ \widehat{T}_i) (\theta^*) -
      (\widehat\theta^*)^\intercal \, (\phi \circ {\widehat{T}})
      (\theta^*)
      \right) \,,
  \end{eqnarray*}
  proving Claim~3, since the other implication is identical.\claimend

  \paragraph{Claim~4: $\displaystyle \widehat\theta^*_i > 0$ and
    $\widehat\theta^*_j > 0 \implies \widehat{T}_i (\theta^*) =
    \widehat{T}_j (\theta^*)$.}

  By the above claims:
  \begin{flalign*}
    \widehat\theta^*_i & = \dfrac{\max\left\{ 0, \widehat\theta^*_i -
        \lambda \left((\bar\phi \circ \widehat{T}_i) (\theta^*) -
          (\widehat\theta^*)^\intercal \, (\phi \circ {\widehat{T}})
          (\theta^*) \right) \right\}}{%
      \sum\limits_{k=1}^n \max \left\{ 0, \widehat\theta^*_k - \lambda
        \left( (\bar\phi \circ \widehat{T}_k) (\theta^*) -
          (\widehat\theta^*)^\intercal \, (\phi \circ {\widehat{T}})
          (\theta^*) \right) \right\} } & [\mbox{$\theta^*$ is a fixed
      point}]
    \\
    & = \dfrac{%
      \widehat\theta^*_i - \lambda \left( (\bar\phi \circ
        \widehat{T}_i) (\theta^*) - (\widehat\theta^*)^\intercal \,
        (\phi \circ {\widehat{T}}) (\theta^*) \right) }{%
      \sum\limits\limits_{k \colon \widehat\theta^*_k>0} \max \left\{
        0, \widehat\theta^*_k - \lambda \left( (\bar\phi \circ
          \widehat{T}_k) (\theta^*) - (\widehat\theta^*)^\intercal \,
          (\phi \circ {\widehat{T}}) (\theta^*) \right) \right\} } &
    [\widehat\theta^*_i > 0 \mbox{ and Claim~3}]
    \\
    & = \dfrac{%
      \widehat\theta^*_i - \lambda \left( (\bar\phi \circ
        \widehat{T}_i) (\theta^*) - (\widehat\theta^*)^\intercal \,
        (\phi \circ {\widehat{T}}) (\theta^*) \right) }{%
      \sum\limits\limits_{k \colon \widehat\theta^*_k>0} \left(
        \widehat\theta^*_k - \lambda \left( (\bar\phi \circ
          \widehat{T}_k) (\theta^*) - (\widehat\theta^*)^\intercal \,
          (\phi \circ {\widehat{T}}) (\theta^*) \right) \right) } &
    [\widehat\theta^*_k > 0 \mbox{ and Claim~3}]
    \\
    & = \dfrac{ \widehat\theta^*_i - \lambda \left( (\bar\phi \circ
        \widehat{T}_i) (\theta^*) - (\widehat\theta^*)^\intercal \,
        (\phi \circ {\widehat{T}}) (\theta^*) \right) }{%
      1 - \lambda \sum\limits\limits_{k \colon \widehat\theta^*_k>0}
      \left( (\bar\phi \circ \widehat{T}_k) (\theta^*) -
        (\widehat\theta^*)^\intercal \, (\phi \circ {\widehat{T}})
        (\theta^*) \right) } & [\sum\limits_{k \colon
      \widehat\theta^*_k>0} \widehat\theta^*_k =1]
  \end{flalign*}
  so that
  \begin{equation}
    \label{eq:4}
    (\bar\phi \circ
    \widehat{T}_i) (\theta^*) - (\widehat\theta^*)^\intercal \,
    (\phi \circ {\widehat{T}}) (\theta^*)
    =
    \widehat\theta_i^* \,
    \sum\limits_{k \colon \widehat\theta^*_k>0}
    \left(
      (\bar\phi \circ
      \widehat{T}_k) (\theta^*) - (\widehat\theta^*)^\intercal \,
      (\phi \circ {\widehat{T}}) (\theta^*)    \right) \,.
  \end{equation}
  Repeating the same procedure with $\widehat\theta^*_j$, we have that
  for all $j$ such that $\widehat\theta^*_j > 0$, the differences
  $(\bar\phi \circ T_i) (\theta^*) - (\widehat\theta^*)^\intercal
  (\phi\circ T) (\theta^*)$ all have the same sign. Since
  $(\widehat\theta^*)^\intercal (\phi\circ T) (\theta^*)$ is a convex
  combination of these $(\bar\phi \circ T_j) (\theta^*)$, we have that
  for all $j$ such that $\widehat\theta^*_j > 0$,
  $(\bar\phi \circ T_j) (\theta^*) = (\widehat\theta^*)^\intercal
  (\phi\circ T) (\theta^*)$. Claim~4 now follows by~\eqref{eq:40new},
  the other implication being analogous.\claimend

  \goodbreak \medskip

  Hence, by Claim~4., $\theta^*$ is an equilibrium point in the sense
  of Definition~\ref{def:Eq}. To prove property~\eqref{eq:9}, by
  Claim~2 if $\widehat\theta^*_i =0$, we have:
  \begin{flalign*}
    (\bar\phi \circ \widehat{T}_i) (\theta^*) %
    & \geq %
    (\widehat \theta^*)^\intercal\, (\phi \circ {\widehat
      T})(\theta^*)
    \\
    & = %
    \sum_{j\colon \widehat\theta^*_j >0} \widehat\theta^*_j \;
    (\bar\phi\circ \widehat T_j) (\theta^*)
    \\
    & = %
    (\bar\phi \circ \widehat T_j) (\theta^*) \quad \forall\,j \colon
    \widehat\theta^*_j >0%
    & [\mbox{By Claim~4}]
  \end{flalign*}
  and the monotonicity of $\bar\phi$ ensures that~\eqref{eq:9} holds.
\end{proofof}

Observe that by the above proof any Nash equilibrium is a fixed point
for $\mathcal{F}$ in~\eqref{eq:15}.

\begin{proofof}{Proposition~\ref{prop:unique0}}
  If $\theta^*$ is any Nash equilibrium, define the sets of indexes
  \begin{equation}
    \label{eq:20}
    \begin{array}{rcl}
      \widehat I (\theta^*)
      & \colonequals
      & \left\{
        i \in \{1, \ldots, \widehat n\} \colon
        \widehat{T}_i (\theta^*) = {\widehat{T}_M} (\theta^*)
        \right\}
      \\
      \widecheck I (\theta^*)
      & \colonequals
      & \left\{
        i \in \{1, \ldots, \widecheck n\} \colon
        \widecheck{T}_i (\theta^*) = {\widecheck{T}_M} (\theta^*)
        \right\}
    \end{array}
  \end{equation}
  and note that, by Definition~\ref{def:candiate},
  \begin{eqnarray*}
    i \in \{1, \ldots, \widehat n\} \setminus \widehat I (\theta^*)
    & \implies
    & \widehat\theta^*_i = 0
      \quad \mbox{ and } \quad
      \widehat{T}_i (\theta^*) > {\widehat{T}_M} (\theta^*) \,;
    \\
    i \in \{1, \ldots, \widecheck n\} \setminus \widecheck I (\theta^*)
    & \implies
    & \widecheck \theta^*_i = 0
      \quad \mbox{ and } \quad
      \widecheck{T}_i (\theta^*) > {\widecheck{T}_M} (\theta^*).
  \end{eqnarray*}
  Recall that $\theta'$ and $\theta''$ are Nash equilibria. With
  reference to~\eqref{eq:20}, introduce the sets
  \begin{displaymath}
    \begin{array}{c}
      \mathcal{\widehat I} \colonequals \widehat I (\theta') \cap \widehat I (\theta'') \,,\qquad
      \widehat{\mathcal{J}} \colonequals \widehat I(\theta') \setminus \widehat I (\theta'') \,,\qquad
      \widehat{\mathcal{K}} \colonequals \widehat I (\theta'') \setminus \widehat I (\theta')
      \\
      \mathcal{\widecheck I} \colonequals \widecheck I (\theta') \cap \widecheck I (\theta'') \,,\qquad
      \widecheck{\mathcal{J}} \colonequals \widecheck I(\theta') \setminus \widecheck I (\theta'') \,,\qquad
      \widecheck{\mathcal{K}} \colonequals \widecheck I (\theta'') \setminus \widecheck I (\theta')
    \end{array}
  \end{displaymath}
  so that
  \begin{eqnarray}
    \label{eq:43}
    &
    & \widehat{\mathcal{I}} \cup \widehat{\mathcal{J}} \cup
      \widehat{\mathcal{K}} = \{1, \ldots,n\} \,,
    \\
    \label{eq:16}
    i
    \in
    \widehat{\mathcal{I}}
    & \implies
    & \widehat{T}_i (\theta')
      =
      {\widehat{T}_M} (\theta')
      \qquad
      \widehat{T}_i (\theta'')
      =
      {\widehat{T}_M} (\theta'') \,,
    \\
    \label{eq:18}
    j
    \in
    \widehat{\mathcal{J}}
    & \implies
    & \widehat{T}_j (\theta')
      =
      {\widehat{T}_M} (\theta')
      \qquad
      \widehat{T}_j (\theta'')
      >
      {\widehat{T}_M} (\theta'')
      \qquad
      \theta''_j
      =
      0\,,
    \\
    \label{eq:44}    k
    \in
    \widehat{\mathcal{K}}
    & \implies
    & \widehat{T}_k (\theta')
      >
      {\widehat{T}_M} (\theta')
      \qquad
      \widehat{T}_k (\theta'')
      =
      {\widehat{T}_M} (\theta'')
      \qquad
      \theta'_k
      =
      0\,.
  \end{eqnarray}
  We then have
  \begin{flalign*}
    & (\widehat\theta'' - \widehat\theta')^\intercal \,
    \left(\widehat{T} (\theta'') - \widehat{T} (\theta')\right) %
    \\
    = & \sum_{i \in \widehat{\mathcal{I}}} (\widehat\theta''_i -
    \widehat\theta'_i) \, \left(\widehat{T}_i (\theta'') -
      \widehat{T}_i (\theta')\right)%
    & [\mbox{By~\eqref{eq:43}}]
    \\
    & + \sum_{j \in \widehat{\mathcal{J}}} (\widehat\theta''_j -
    \widehat\theta'_j) \, \left(\widehat{T}_j (\theta'') -
      \widehat{T}_j (\theta')\right) + \sum_{k \in
      \widehat{\mathcal{K}}} (\widehat\theta''_k - \widehat\theta'_k)
    \, \left(\widehat{T}_k (\theta'') - \widehat{T}_k (\theta')\right)
    \\
    = & \sum_{i \in \widehat{\mathcal{I}}} (\widehat\theta''_i -
    \widehat\theta'_i) \, \left({\widehat{T}_M} (\theta'') -
      {\widehat{T}_M} (\theta')\right) %
    & [\mbox{By~\eqref{eq:16}}]
    \\
    & + \sum_{j \in \widehat{\mathcal{J}}} - \widehat\theta'_j \,
    \left(\widehat{T}_j (\theta'') - {\widehat{T}_M}
      (\theta')\right) %
    + %
    \sum_{k \in \widehat{\mathcal{K}}} \widehat\theta''_k \,
    \left({\widehat{T}_M} (\theta'') - \widehat{T}_k
      (\theta')\right) %
    & [\mbox{By~\eqref{eq:18} and~\eqref{eq:44}}]
    \\
    \leq & \sum_{i \in \widehat{\mathcal{I}}} (\widehat\theta''_i -
    \widehat\theta'_i) \, \left({\widehat{T}_M} (\theta'') -
      {\widehat{T}_M} (\theta')\right)
    \\
    & + \sum_{j \in \widehat{\mathcal{J}}} - \widehat\theta'_j \,
    \left({\widehat{T}_M} (\theta'') - {\widehat{T}_M}
      (\theta')\right) %
    + %
    \sum_{k \in \widehat{\mathcal{K}}} \widehat\theta''_k \,
    \left({\widehat{T}_M} (\theta'') - {\widehat{T}_M}
      (\theta')\right) %
    & [\mbox{By~\eqref{eq:18} and~\eqref{eq:44}}]
    \\
    = & \left( \sum_{i \in \widehat{\mathcal{I}} \cup
        \widehat{\mathcal{K}}} \widehat\theta''_i - \sum_{i \in
        \widehat{\mathcal{I}} \cup \widehat{\mathcal{J}}}
      \widehat\theta'_i \right) \left({\widehat{T}_M} (\theta'') -
      {\widehat{T}_M} (\theta')\right)
    \\
    = & \; 0 \,.
  \end{flalign*}
  An entirely similar procedure yields that also
  $(\widecheck\theta'' - \widecheck\theta')^\intercal \,
  \left(\widecheck{T} (\theta'') - \widecheck{T} (\theta')\right) =
  0$.
\end{proofof}

\section{Conclusions}
\label{sec:conclusions}

The present framework is amenable to a variety to extensions. For
instance, the presence of a two way road in the network fits in the
present setting by considering the two directions as two independent
roads with reverse orientation. The present construction applies to
the so obtained undirected graphs. However, in this model the travel
times in each direction result to be independent from the traffic in
the opposite direction. Hence, a possible extension may consist in
road travel times depending also on the traffic density along other
roads.

Inherent to the dynamics of vehicular traffic is the presence of
stochastic disturbances. Therefore, an extension of the present
setting to that of stochastic games~\cite{Shapley} is definitely worth
pursueing, possibly on the basis of the recent works~\cite{cite-key,
  10537108}.

As it is well known, the complexity of the actual computation of Nash
equilibria grows incredibly fast as the numbers of routes in the
network and of populations grow,
see~\cite[Chapter~2]{NisaRougTardVazi07}
or~\cite[Chapter~14]{PAPADIMITRIOU2015779}. Notably, Nash equilibria
can be found in entirely different ways, ranging from convex
optimization methods to those based on Brouwer fixed point
theorem. Nevertheless, the complexity of all of these methods are
essentially equivalent, see~\cite{doi:10.1137/070699652}.

\appendix

\section{Appendix: A Uniqueness Result}
\label{sec:uniqueness}
We now ensure the uniqueness of Nash equilibria. A key role is played
by Condition~\ref{it:IR} together with a sort of non-degeneracy
assumption, which we require together with assumption~\ref{item:E1}.

\begin{lemma}
  \label{lem:defPos}
  Let $\widehat Q, \widehat P, \widecheck Q$ and $\widecheck P$ be
  $N \times N$ diagonal matrices with non-negative entries. Then,
  \begin{displaymath}
    \left[
      \begin{array}{@{}cc@{}}
        \widehat Q
        & \widehat P
        \\
        \widecheck P
        & \widehat Q
      \end{array}
    \right]
    {\mbox{is positive}\atop\mbox{semidefinite}}
    {\iff}
    \forall \, h \in \{1, \ldots, N\}
    \begin{array}{l@{\!\!\!\!\!\!}c@{}}
      \mbox{either: }
      & \widehat Q_{hh} {=} 0 ,\;
        \widecheck Q_{hh} {=} 0,\;
        \widehat P_{hh} {=} 0 ,\;
        \widecheck P_{hh} {=} 0;
      \\
      \mbox{or: }
      & \widehat Q_{hh} {>} 0 ,\;
        \widecheck Q_{hh} {=} 0,\;
        \widehat P_{hh} {=} 0 ,\;
        \widecheck P_{hh} {=} 0;
      \\
      \mbox{or: }
      & \widehat Q_{hh} {=} 0 ,\;
        \widecheck Q_{hh} {>} 0,\;
        \widehat P_{hh} {=} 0 ,\;
        \widecheck P_{hh} {=} 0;
      \\
      \mbox{or: }
      & \widehat Q_{hh} {>} 0 ,\;\, \widecheck Q_{hh} {>} 0,\;
        4 \, \widehat Q_{hh} \, \widecheck Q_{hh}
        {\geq}
        (\widehat P_{hh} + \widecheck P_{hh})^2.
    \end{array}
  \end{displaymath}
\end{lemma}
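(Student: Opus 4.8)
The plan is to exploit that all four blocks are diagonal, which collapses the $2N\times2N$ matrix into a direct sum of $N$ independent scalar $2\times2$ problems. First I would reindex the $2N$ coordinates, pairing the $h$-th coordinate of the first block with the $h$-th coordinate of the second; this amounts to a congruence by a permutation matrix, and since for a permutation matrix $P$ one has $P^\intercal = P^{-1}$, such a congruence preserves positive semidefiniteness. After it, the matrix becomes block-diagonal with the $N$ blocks
\begin{displaymath}
  M_h = \left[\begin{array}{cc} \widehat Q_{hh} & \widehat P_{hh} \\ \widecheck P_{hh} & \widecheck Q_{hh} \end{array}\right], \qquad h \in \{1, \ldots, N\},
\end{displaymath}
so that, reading off the quadratic form one block at a time, the full matrix is positive semidefinite if and only if each $M_h$ is.

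The one point that needs care is that $\widehat P$ and $\widecheck P$ need not coincide, so $M_h$ is generally \emph{not} symmetric. I would therefore recall that a real matrix $A$ satisfies $x^\intercal A x \geq 0$ for all $x$ exactly when its symmetric part $\tfrac12(A+A^\intercal)$ is positive semidefinite, and compute
\begin{displaymath}
  \tfrac12(M_h + M_h^\intercal) = \left[\begin{array}{cc} \widehat Q_{hh} & \tfrac12(\widehat P_{hh}+\widecheck P_{hh}) \\ \tfrac12(\widehat P_{hh}+\widecheck P_{hh}) & \widecheck Q_{hh} \end{array}\right].
\end{displaymath}

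It then remains to apply the elementary criterion that a symmetric matrix $\left[\begin{smallmatrix} a & b \\ b & c\end{smallmatrix}\right]$ with $a,c\geq0$ is positive semidefinite if and only if $ac\geq b^2$. Since $\widehat Q_{hh}\geq0$ and $\widecheck Q_{hh}\geq0$ by hypothesis, the only surviving requirement is $4\,\widehat Q_{hh}\,\widecheck Q_{hh}\geq(\widehat P_{hh}+\widecheck P_{hh})^2$, and I would finish by unfolding this single inequality into the stated disjunction: if at least one of $\widehat Q_{hh},\widecheck Q_{hh}$ vanishes the inequality forces $(\widehat P_{hh}+\widecheck P_{hh})^2\leq0$, whence $\widehat P_{hh}=\widecheck P_{hh}=0$ by non-negativity, producing the first three cases, while if both are strictly positive the inequality is retained verbatim as the fourth case. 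Because every step is an equivalence, both directions of the biconditional are obtained at once. The only genuinely non-routine ingredient is the passage to the symmetric part; the rest is a scalar verification handled block by block.
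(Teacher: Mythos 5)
Your proof is correct and follows essentially the same route as the paper's: the paper likewise reduces the problem to $N$ independent binary quadratic forms by expanding
$v^\intercal M v=\sum_{h=1}^N\bigl(\widehat Q_{hh}\,(\widehat v_h)^2+(\widehat P_{hh}+\widecheck P_{hh})\,\widehat v_h\,\widecheck v_h+\widecheck Q_{hh}\,(\widecheck v_h)^2\bigr)$,
which packages your permutation--block-diagonalization and symmetric-part steps into a single computation (the combined coefficient $\widehat P_{hh}+\widecheck P_{hh}$ is exactly your symmetrization). Your explicit unfolding of $4\,\widehat Q_{hh}\,\widecheck Q_{hh}\geq(\widehat P_{hh}+\widecheck P_{hh})^2$ into the four stated cases is somewhat more detailed than the paper's one-line conclusion, but the underlying argument is the same.
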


Note that in the case of more than 2 populations the above statement
remains substantially unaltered, while a quite different proof is
necessary.

\begin{proofof}{Lemma~\ref{lem:defPos}}
  Write a $v \in \reali^{2N}$ as a pair
  $\left[{\widehat v \atop \widecheck v}\right]$ with
  $\widehat v , \widecheck v \in \reali^N$. Then,
  \begin{equation}
    \label{eq:5}
    \left[\widehat v^\intercal \quad \widecheck v^\intercal\right]
    \left[
      \begin{array}{cc}
        \widehat Q
        & \widehat P
        \\
        \widecheck P
        & \widehat Q
      \end{array}
    \right]
    \left[
      \begin{array}{c}
        \widehat v
        \\
        \widecheck v
      \end{array}
    \right]
    =
    \sum_{h=1}^N
    \left(
      \widehat Q_{hh} \, (\widehat v_h)^2
      +
      (\widehat P_{hh} + \widecheck P_{hh})
      \widehat v_h \, \widecheck v_h
      + \widecheck Q_{hh} \, (\widecheck v_h)^2
    \right).
  \end{equation}
  Hence, the form
  $\left[{\widehat Q \; \widehat P \atop \widecheck P \; \widehat
      Q}\right]$ is positive semidefinite if and only if each summand
  in the right hand side above is a non-negative second order
  polynomial, i.e., if and only if any one of the conditions in the
  hypothesis holds.
\end{proofof}

\begin{theorem}
  \label{thm:uniqueness}
  Let~\ref{it:IR}, \ref{item:E1} hold. Define for
  $h \in \{1, \ldots, N\}$,
  $\widehat\theta, \widehat\theta', \widehat\theta'' \in S^{\widehat
    n}$,
  $\widecheck \theta, \widecheck\theta', \widecheck\theta'' \in
  S^{\widecheck n}$,
  \begin{equation}
    \label{eq:38}
    \begin{array}{rcl}
      \widehat Q_h (\widehat\theta', \widehat\theta'', \widecheck \theta)
      & =
      & \displaystyle\int_0^1
        \partial_{\widehat\eta}\widehat\tau_h
        \left(
        \widehat\Gamma
        \left(s \, \widehat\theta'' + (1-s) \widehat\theta'\right),
        \widecheck\Gamma \, \widecheck\theta
        \right)
        \d{s}
      \\
      \widehat P_h (\widehat\theta, \widecheck\theta', \widecheck\theta'')
      & =
      & \displaystyle\int_0^1
        \partial_{\widecheck\eta}\widehat\tau_h
        \left(
        \widehat\Gamma \, \widehat\theta,
        \widecheck\Gamma
        \left((1-s)\widecheck\theta''+s\widecheck \theta'\right)
        \right)
        \d{s}
      \\
      \widecheck P_h (\widehat\theta', \widehat\theta'', \widecheck \theta)
      & =
      & \displaystyle\int_0^1
        \partial_{\widehat\eta}\widecheck\tau_h
        \left(
        \widehat\Gamma
        \left(s \, \widehat\theta'' + (1-s) \, \widehat\theta'\right),
        \widecheck\Gamma \, \widecheck\theta
        \right)
        \d{s}
      \\
      \widecheck Q_h (\widehat\theta, \widecheck\theta', \widecheck\theta'')
      & =
      & \displaystyle\int_0^1
        \partial_{\widecheck\eta}\widecheck\tau_h
        \left(
        \widehat\Gamma \, \widehat\theta,
        \widecheck\Gamma
        \left(s \, \widecheck\theta''+ (1-s) \,\widecheck \theta'\right)
        \right)
        \d{s}
    \end{array}
  \end{equation}
  Assume moreover that
  \begin{enumerate}[label={\bf(H)}]
  \item \label{item:2} For all
    $\widehat\theta', \widehat\theta'' \in S^{\widehat n}$,
    $\widecheck\theta', \widecheck\theta'' \in S^{\widecheck n}$, and
    for all $h \in \{1, \ldots, N\}$, with the exception of at most
    one $\bar h$
    \begin{enumerate}[label={\bf(H0)}]
    \item \label{item:11}
      $\widehat Q_h(\widehat\theta', \widehat\theta'', \widecheck
      \theta'') > 0$,
      $\widecheck Q_h (\widehat\theta', \widecheck\theta',
      \widecheck\theta'')>0$ and
      \\
      $4 \, \widehat Q_h(\widehat\theta', \widehat\theta'', \widecheck
      \theta'') \; \widecheck Q_h (\widehat\theta', \widecheck\theta',
      \widecheck\theta'') > \left( \widehat P_h (\widehat\theta',
        \widecheck\theta', \widecheck\theta'') + \widecheck P_h
        (\widehat\theta'', \widehat\theta', \widecheck \theta'')
      \right)^2$,
    \end{enumerate}
    while at $\bar h$ one of the following conditions holds:
    \begin{enumerate}[label={\bf(H\arabic*)}]
    \item\label{item:7}
      $\widehat Q_h(\widehat\theta', \widehat\theta'', \widecheck
      \theta'') = 0$,
      $\widecheck Q_h (\widehat\theta', \widecheck\theta',
      \widecheck\theta'') = 0$,
      $\widehat P_h (\widehat\theta', \widecheck\theta',
      \widecheck\theta'') = 0$ and
      $\widecheck P_h (\widehat\theta'', \widehat\theta', \widecheck
      \theta'') = 0$;
    \item\label{item:8}
      $\widehat Q_h(\widehat\theta', \widehat\theta'', \widecheck
      \theta'') >0$,
      $\widecheck Q_h (\widehat\theta', \widecheck\theta',
      \widecheck\theta'') = 0$,
      $\widehat P_h (\widehat\theta', \widecheck\theta',
      \widecheck\theta'') = 0$ and
      $\widecheck P_h (\widehat\theta'', \widehat\theta', \widecheck
      \theta'') = 0$;
    \item\label{item:9}
      $\widehat Q_h(\widehat\theta', \widehat\theta'', \widecheck
      \theta'') = 0$,
      $\widecheck Q_h (\widehat\theta', \widecheck\theta',
      \widecheck\theta'') > 0$,
      $\widehat P_h (\widehat\theta', \widecheck\theta',
      \widecheck\theta'') = 0$ and
      $\widecheck P_h (\widehat\theta'', \widehat\theta', \widecheck
      \theta'') = 0$;
    \item\label{item:10}
      $\widehat Q_h(\widehat\theta', \widehat\theta'', \widecheck
      \theta'') > 0$,
      $\widecheck Q_h (\widehat\theta', \widecheck\theta',
      \widecheck\theta'')>0$ and\\
      $4 \, \widehat Q_h(\widehat\theta', \widehat\theta'', \widecheck
      \theta'') \; \widecheck Q_h (\widehat\theta', \widecheck\theta',
      \widecheck\theta'') = \left( \widehat P_h (\widehat\theta',
        \widecheck\theta', \widecheck\theta'') + \widecheck P_h
        (\widehat\theta'', \widehat\theta', \widecheck \theta'')
      \right)^2$.
    \end{enumerate}
  \end{enumerate}
  Then, there exists at most one Nash equilibrium.
\end{theorem}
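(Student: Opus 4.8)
The plan is to combine the necessary condition of Proposition~\ref{prop:unique0} with the diagonal difference formulas of Lemma~\ref{lem:diff}, reducing the uniqueness claim to the definiteness characterization of Lemma~\ref{lem:defPos} and then exploiting the structural assumption~\ref{it:IR}. Let $\theta'$ and $\theta''$ be two Nash equilibria, and set $\widehat u = \widehat\theta'' - \widehat\theta'$ and $\widecheck u = \widecheck\theta'' - \widecheck\theta'$. First I would expand each travel-time difference by interpolating one population at a time, choosing the interpolation so that the resulting integrated Jacobians match exactly those in~\eqref{eq:38}: holding $\widecheck\theta = \widecheck\theta''$ while moving $\widehat\theta'$ to $\widehat\theta''$, and holding $\widehat\theta = \widehat\theta'$ while moving $\widecheck\theta'$ to $\widecheck\theta''$. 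By Lemma~\ref{lem:diff} this yields
\[
  \widehat T(\theta'') - \widehat T(\theta') = \widehat\Gamma^\intercal \, \widehat Q \, \widehat\Gamma \, \widehat u + \widehat\Gamma^\intercal \, \widehat P \, \widecheck\Gamma \, \widecheck u, \qquad \widecheck T(\theta'') - \widecheck T(\theta') = \widecheck\Gamma^\intercal \, \widecheck P \, \widehat\Gamma \, \widehat u + \widecheck\Gamma^\intercal \, \widecheck Q \, \widecheck\Gamma \, \widecheck u,
\]
where $\widehat Q, \widehat P, \widecheck P, \widecheck Q$ are precisely the diagonal matrices of~\eqref{eq:38} evaluated at the arguments appearing in Assumption~\ref{item:2}; here one uses the substitution $s \mapsto 1-s$ to identify $\widecheck P_h(\widehat\theta'', \widehat\theta', \widecheck\theta'')$ with the matrix $\widecheck P_h(\widehat\theta', \widehat\theta'', \widecheck\theta'')$ produced by the decomposition, and similarly for $\widehat P$.

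Next I would invoke Proposition~\ref{prop:unique0}, which gives $\widehat u^\intercal (\widehat T(\theta'') - \widehat T(\theta')) = 0$ and $\widecheck u^\intercal (\widecheck T(\theta'') - \widecheck T(\theta')) = 0$. Adding the two and writing $\widehat w = \widehat\Gamma \, \widehat u$ and $\widecheck w = \widecheck\Gamma \, \widecheck u$ in $\reali^N$, the four bilinear contractions collapse, since $\widehat Q, \widehat P, \widecheck P, \widecheck Q$ are diagonal, into exactly the quadratic form of Lemma~\ref{lem:defPos}:
\[
  \sum_{h=1}^N \left( \widehat Q_{hh} \, \widehat w_h^2 + (\widehat P_{hh} + \widecheck P_{hh}) \, \widehat w_h \, \widecheck w_h + \widecheck Q_{hh} \, \widecheck w_h^2 \right) = 0.
\]
Assumption~\ref{item:2} guarantees that every index $h$ satisfies one of the four cases of Lemma~\ref{lem:defPos}, so by that lemma the block matrix is positive semidefinite and each summand above is nonnegative; a sum of nonnegative terms equal to zero forces every summand to vanish. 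For each $h \neq \bar h$ the \emph{strict} condition~\ref{item:11} makes the $h$-th binary quadratic form positive definite, hence its vanishing gives $\widehat w_h = \widecheck w_h = 0$; that is, $(\widehat\Gamma \, \widehat u)_h = 0$ and $(\widecheck\Gamma \, \widecheck u)_h = 0$ for every road $h$ with the possible exception of $\bar h$.

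Finally I would apply~\ref{it:IR}. Since each hat route owns a road contained in no other hat route, $\widehat\Gamma$ contains a copy of $\Id_{\widehat n}$, and reading off such a private row $h_i$ gives $(\widehat\Gamma \, \widehat u)_{h_i} = \widehat u_i$. As the single exceptional road $\bar h$ can be private to at most one hat route, we deduce $\widehat u_i = 0$ for all indices $i$ save at most one; the simplex constraint $\sum_i \widehat u_i = 1 - 1 = 0$ then forces the remaining component to vanish, so $\widehat u = 0$. The verbatim argument for $\widecheck\Gamma$ gives $\widecheck u = 0$, whence $\theta' = \theta''$. I expect the only delicate point to be the bookkeeping around $\bar h$: one must verify that discarding the single possibly-degenerate road costs at most one route-component in \emph{each} population, which is exactly what the two simplex constraints repair; everything else is a direct assembly of the earlier lemmas.
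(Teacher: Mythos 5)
Your proof is correct and follows essentially the same route as the paper's: the same one-population-at-a-time decomposition via Lemma~\ref{lem:diff} (including the $s\mapsto 1-s$ identification of the $P$-matrices), the same reduction to the quadratic form of Lemma~\ref{lem:defPos}, the same appeal to Proposition~\ref{prop:unique0}, and the same use of~\ref{it:IR} together with the simplex constraints to conclude $\theta'=\theta''$. The only difference is organizational: you argue directly (the vanishing of the form forces $\widehat w_h=\widecheck w_h=0$ for every $h\neq\bar h$, after which the private roads and the zero-sum constraint kill the remaining components), whereas the paper proceeds by contradiction, first characterizing the isotropic cone through the case analysis~\ref{item:7}--\ref{item:10} and then showing that the difference of two distinct equilibria cannot lie in it --- your handling of $\bar h$ via the simplex constraint neatly sidesteps that case analysis.
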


\begin{proofof}{Theorem~\ref{thm:uniqueness}}
  Assume $\theta'$ and $\theta''$ are Nash equilibria and using
  Lemma~\ref{lem:diff} write
  \begin{eqnarray*}
    \widehat{T} (\theta'') - \widehat{T} (\theta')
    & =
    & \widehat{T} (\widehat\theta'',\widecheck\theta'')
      - \widehat{T} (\widehat\theta',\widecheck\theta')
    \\
    & =
    & \widehat{T} (\widehat\theta'',\widecheck\theta'')
      - \widehat{T} (\widehat\theta',\widecheck\theta'')
      + \widehat{T} (\widehat\theta',\widecheck\theta'')
      - \widehat{T} (\widehat\theta',\widecheck\theta')
    \\
    & =
    & \widehat\Gamma^\intercal \, \widehat Q \,\widehat \Gamma
      (\widehat \theta'' - \widehat \theta')
      +
      \widehat\Gamma^\intercal \, \widehat P \, \widecheck \Gamma
      (\widecheck\theta'' - \widecheck\theta')
    \\
    & =
    & \left[
      \widehat\Gamma^\intercal \, \widehat Q \,\widehat \Gamma
      \quad
      \widehat\Gamma^\intercal \, \widehat P \, \widecheck \Gamma
      \right]
      \;
      (\theta'' - \theta') \,,
  \end{eqnarray*}
  where, with the notation in~\eqref{eq:38}, the diagonal $N\times N$
  matrices $\widehat Q$ and $\widehat P$ are given by
  $\widehat Q_{hh} = \widehat Q_h (\widehat\theta', \widehat\theta'',
  \widecheck\theta'')$ and
  $\widehat P_{hh} = \widehat P_h (\widehat\theta', \widecheck\theta',
  \widecheck\theta'')$, for $h \in \{1, \ldots, N\}$.  Similarly,
  $\widecheck T (\theta'') - \widecheck T (\theta') = \left[
    \widecheck\Gamma^\intercal \, \widecheck P \,\widehat \Gamma \quad
    \widecheck\Gamma^\intercal \, \widecheck Q \, \widecheck \Gamma
  \right] \; (\theta'' - \theta')$, where, with the notation
  in~\eqref{eq:38}, the diagonal $N\times N$ matrices $\widecheck Q$
  and $\widecheck P$ are given by
  $\widecheck Q_{hh} = \widecheck Q_h (\widehat\theta',
  \widecheck\theta', \widecheck\theta'')$ and
  $\widecheck P_{hh} = \widecheck P_h (\widehat\theta'',
  \widecheck\theta', \widecheck\theta'')$, for
  $h \in \{1, \ldots, N\}$.

  Hence,
  \begin{eqnarray}
    \nonumber
    \left[
    \begin{array}{c}
      \widehat{T} (\theta'') - \widehat{T} (\theta')
      \\
      \widecheck T (\theta'') - \widecheck T (\theta')
    \end{array}
    \right]
    & =
    & \left[
      \begin{array}{c}
        \widehat\Gamma^\intercal\, \widehat Q \,\widehat \Gamma
        \quad
        \widehat\Gamma^\intercal \, \widehat P \, \widecheck \Gamma
        \\
        \widecheck\Gamma^\intercal \, \widecheck P \,\widehat \Gamma
        \quad
        \widecheck\Gamma^\intercal \, \widecheck Q \, \widecheck \Gamma
      \end{array}
    \right]
    \quad
    (\theta'' - \theta')
    \\
    \label{eq:48}
    & =
    & \left[
      \begin{array}{cc}
        \widehat\Gamma^\intercal
        & 0
        \\
        0
        & \widecheck\Gamma^\intercal
      \end{array}
          \right]
          \;
          \left[
          \begin{array}{cc}
            \widehat Q
            & \widehat P
            \\
            \widecheck P
            & \widecheck Q
          \end{array}
              \right]
              \;
              \left[
              \begin{array}{cc}
                \widehat\Gamma
                & 0
                \\
                0
                & \widecheck\Gamma
              \end{array}
                  \right]
                  \;(\theta'' - \theta') \,.
  \end{eqnarray}

  \paragraph{Claim~1: The vectors
    $v = \left[{{\widehat v} \atop {\widecheck v}}\right]$ isotropic
    with respect to
    $v \mapsto v^\intercal \, \left[{{\widehat Q \; \widehat P}
        \atop{\widecheck P \; \widecheck Q}} \right] \,v$ are such
    that at most one component of $\widehat v$ is non-zero and at most
    one component of $\widecheck v$ is non-zero. }
  The form
  $v \mapsto v^\intercal \, \left[{{\widehat Q \; \widehat P}
      \atop{\widecheck P \; \widecheck Q}} \right] \,v$ is positive
  semidefinite on $\reali^{2N}$ by Lemma~\ref{lem:defPos}, which can
  be applied by~\ref{item:2}. Call $\mathcal{I}$ the subset of
  $\reali^{2N}$ consisting of isotropic vectors for the quadratic,
  though not necessarily symmetric, form
  $v \mapsto v^\intercal \, \left[{{\widehat Q \; \widehat P}
      \atop{\widecheck P \; \widecheck Q}} \right] \,v$. Call
  $v = \left[{{\widehat v} \atop {\widecheck v}}\right]$ a vector in
  $\mathcal{I}$. Fix $h \in \{1, \ldots, N\}$.  If~\ref{item:11}
  holds, then $\widehat v_h = 0$ and $\widecheck v_h = 0$.  Hence, if
  there is no $\bar h$ where equality in~\ref{item:2} holds, then
  necessarily $\mathcal{I} = \{0\}$ and the Claim follows.

  If there is a $\bar h$, where~\ref{item:11} does not hold, then $4$
  cases are in order, and by means of the representation~\eqref{eq:5},
  we have:
  \begin{description}
  \item[Case~\ref{item:7}:]
    $\mathcal{I} = {\rm span} \{e_{\bar h}, e_{N+\bar h}\}$.
  \item[Case~\ref{item:8}:]
    $\mathcal{I} = {\rm span} \{e_{N+\bar h}\}$.
  \item[Case~\ref{item:9}:] $\mathcal{I} = {\rm span} \{e_{\bar h}\}$.
  \item[Case~\ref{item:10}:]
    $\mathcal{I} = {\rm span} \left\{\sqrt{\widecheck Q_{\bar h}} \;
      e_{\bar h} - \sqrt{\widehat Q_{\bar h}} \; e_{N+\bar h}
    \right\}$.
  \end{description}
  Therefore, any vector
  $v = \left[{{\widehat v} \atop {\widecheck v}}\right]$ in
  $\mathcal{I}$ is such that at most one component of $\widehat v$ is
  non-zero and at most one component of $\widecheck v$ is
  non-zero.\claimend

  \paragraph{Claim~2: If
    $\theta'\equiv(\widehat\theta',\widecheck\theta'), \,
    \theta''\equiv(\widehat\theta'', \widehat\theta'') \in S^N$ and
    $\theta' \neq \theta''$, then at least one of the $2$ vectors
    $\widehat\Gamma(\widehat\theta''-\widehat\theta')$ or
    $\widecheck\Gamma(\widecheck\theta''-\widecheck\theta')$ has at
    least $2$ non-zero components.}

  We are assuming $\theta' \neq \theta''$. Then,
  $\widehat\theta' \neq \widehat\theta''$ or
  $\widecheck\theta' \neq \widecheck\theta''$.  Since
  $\sum_{i=1}^{\widehat n} \widehat\theta'_i = 1 =
  \sum_{j=1}^{\widecheck n} \widehat\theta''_j$, at least one of the
  $2$ vectors $\widehat\theta'' - \widehat\theta'$ or
  $\widecheck\theta'' - \widecheck\theta'$ has at least $2$ non-zero
  components.

  By Condition~\ref{it:IR} and Remark~\ref{rem:perm}, $\widehat\Gamma$
  admits the decomposition $\widehat\Gamma = \left[
    \begin{array}{c}
      \Id_{\widehat n}
      \\ \hdashline[2pt/3pt]
      \widehat\gamma
    \end{array}
  \right]$, where the matrix $\widehat\gamma$ has order
  $(N-\widehat n) \times \widehat n$ and its entries are either $0$ or
  $1$. Similarly, $\widecheck\Gamma$ admits the decomposition
  $\widecheck\Gamma = \left[
    \begin{array}{c}
      \Id_{\widecheck n}
      \\ \hdashline[2pt/3pt]
      \widecheck\gamma
    \end{array}
  \right]$, where the matrix $\widecheck\gamma$ has order
  $(N-\widecheck n) \times \widecheck n$ and its entries are either
  $0$ or $1$.

  Thus,
  \begin{displaymath}
    \widehat\Gamma (\widehat\theta'' - \widehat\theta')
    =
    \left[
      \begin{array}{c}
        \widehat\theta''-\widehat\theta'
        \\ \hdashline[2pt/3pt]
        \widehat\gamma (\widehat\theta''-\widehat\theta')
      \end{array}
    \right]
    \quad \mbox{ and similarly} \quad
    \widecheck\Gamma (\widecheck\theta'' - \widecheck\theta')
    =
    \left[
      \begin{array}{c}
        \widecheck\theta''-\widecheck\theta'
        \\ \hdashline[2pt/3pt]
        \widecheck\gamma (\widecheck\theta''-\widecheck\theta')
      \end{array}
    \right].
  \end{displaymath}
  This proves that at least one of the $2$ vectors
  $\widehat\Gamma(\widehat\theta''-\widehat\theta')$ or
  $\widecheck\Gamma(\widecheck\theta''-\widecheck\theta')$ has at
  least $2$ non-zero components.\claimend

  \paragraph{Claim~3: If $\theta',\theta''$ are distinct Nash equilibria, then
    \begin{equation}
    \label{eq:45}
    \begin{array}{c}
      (\widehat \theta'' - \widehat \theta')^\intercal
      \;
      \left(\widehat{T} (\theta'') - \widehat{T} (\theta')\right)
      \geq
      0
      \,,\qquad
      (\widecheck \theta'' - \widecheck \theta')^\intercal
      \;
      \left(\widecheck{T} (\theta'') - \widecheck{T} (\theta')\right)
      \geq
      0
      \\
      \mbox{and }\quad
      \max\left\{
      (\widehat \theta'' - \widehat \theta')^\intercal
      \;
      \left(\widehat{T} (\theta'') - \widehat{T} (\theta')\right)
      \,,\;
      (\widecheck \theta'' - \widecheck \theta')^\intercal
      \;
      \left(\widecheck{T} (\theta'') - \widecheck{T} (\theta')\right)
      \right\}
      > 0 \,.
    \end{array}
  \end{equation}}
The form
$v \mapsto v^\intercal \left[ {{\widehat\Gamma^\intercal \;\; 0}\atop
    {0 \;\; \widecheck\Gamma^\intercal}}\right] \, \left[{{\widehat Q
      \; \widehat P} \atop{\widecheck P \; \widecheck Q}} \right] \,
\left[ {{\widehat\Gamma \; 0}\atop {0 \; \widecheck\Gamma}}\right]v$
is positive semidefinite on $\reali^{\widehat n + \widecheck
  n}$. Indeed, if there exists a
$v \in \reali^{\widehat n + \widecheck n}$ such that
$v^\intercal \left[ {{\widehat\Gamma^\intercal \;\; 0}\atop {0 \;\;
      \widecheck\Gamma^\intercal}}\right] \, \left[{{\widehat Q \;
      \widehat P} \atop{\widecheck P \; \widecheck Q}} \right] \,
\left[ {{\widehat\Gamma \; 0}\atop {0 \;
      \widecheck\Gamma}}\right]v<0$, then
$\left(\left[ {{\widehat\Gamma \; 0}\atop {0 \;\;
        \widecheck\Gamma}}\right] v\right)^\intercal \,
\left[{{\widehat Q \; \widehat P} \atop{\widecheck P \; \widecheck Q}}
\right] \, \left(\left[ {{\widehat\Gamma \; 0}\atop {0 \;
        \widecheck\Gamma}}\right]v\right)<0$, which contradicts the
fact that
$v \mapsto v^\intercal \, \left[{{\widehat Q \; \widehat P}
    \atop{\widecheck P \; \widecheck Q}} \right] \,v$ is positive
semidefinite on $\reali^{2N}$.

Call $\mathcal{J}$ the subset of $\reali^{\widehat n + \widecheck n}$
consisting of the vectors $v$ isotropic relative to the form
$v \mapsto v^\intercal \left[ {{\widehat\Gamma^\intercal \;\; 0}\atop
    {0 \;\; \widecheck\Gamma^\intercal}}\right] \, \left[{{\widehat Q
      \; \widehat P} \atop{\widecheck P \; \widecheck Q}} \right] \,
\left[ {{\widehat\Gamma \; 0}\atop {0 \;
      \widecheck\Gamma}}\right]v$. Clearly, $v \in \mathcal{J}$ with
$v = \left[{{\widehat v} \atop {\widecheck v}}\right]$, if and only if
$\left[{{\widehat\Gamma \, \widehat v}\atop{\widecheck\Gamma \,
      \widecheck v}} \right] \in \mathcal{I}$.

This shows that
$\left[{{\widehat\Gamma \, \widehat v}\atop{\widecheck\Gamma \,
      \widecheck v}} \right]$ can not be in $\mathcal{I}$. Therefore
\begin{displaymath}
  (\theta'' - \theta')^\intercal \;
  \left[ {{\widehat\Gamma^\intercal \;\;
        0}\atop {0 \;\; \widecheck\Gamma^\intercal}}\right] \;
  \left[{{\widehat Q \;\; \widehat P} \atop{\widecheck P \;\; \widecheck
        Q}} \right] \; \left[ {{\widehat\Gamma \;\; 0}\atop {0 \;\;
        \widecheck\Gamma}}\right] \; (\theta'' - \theta')
  > 0
\end{displaymath}
and by~\eqref{eq:48} we have~\eqref{eq:45}.

The contradiction between Claim~3 and Proposition~\ref{prop:unique0}
shows that $\widehat\theta' = \widehat\theta''$.
\end{proofof}

\paragraph*{Acknowledgments:}
RMC and FM were partially supported by the INdAM-GNAMPA~2023 project
\emph{Analytic Techniques for Biological Models, Fluid Dynamics and
  Vehicular Traffic} and acknowledge the PRIN~2022 project
\emph{Modeling, Control and Games through Partial Differential
  Equations} (D53D23005620006), funded by the European Union - Next
Generation EU. LG was partially supported by the INdAM-GNSAGA~2022
project \emph{Algebraic Structures, Geometry and Applications}.

{ \small

  \bibliography{ColomboGiuzziMarcellini_revised}

  \bibliographystyle{abbrv}

}

\paragraph{Data Availability Statement:}
This manuscript has no associated data.

\end{document}